\documentclass{amsart}
\usepackage{comment,amsfonts,stmaryrd,tikz-cd,amssymb,amsmath,hyperref,appendix,enumerate,amsthm,bbm,mathrsfs,calligra}
\usepackage[style=alphabetic, maxnames=10]{biblatex}
\DeclareMathOperator{\M}{M}

\DeclareMathOperator{\GL}{GL}

\DeclareMathOperator{\Aut}{Aut}
\DeclareMathOperator{\op}{op}

\DeclareMathOperator{\Hom}{Hom}

\DeclareMathOperator{\Ext}{Ext}
\DeclareMathOperator{\Dir}{Dir}
\DeclareMathOperator{\length}{length}

\theoremstyle{plain}
\newtheorem{theorem}{Theorem}[section]
\newtheorem{theorem-definition}{Theorem-Definition}[section]

\newtheorem{corollary}[theorem]{Corollary}

\newtheorem{lemma}[theorem]{Lemma}
\newtheorem{proposition}[theorem]{Proposition}

\theoremstyle{definition}

\newtheorem{proposition-definition}[theorem]{Proposition-Definition}

\newtheorem{example}[theorem]{Example}

\newtheorem{scholium}[theorem]{Scholium}

\theoremstyle{remark}

\newtheorem{notation}[theorem]{Notation}

\newcommand\restr[2]{{
  \left.\kern-\nulldelimiterspace 
  #1 
  \right|_{#2} 
  }}

\title{Solomon zeta functions over arithmetic orders}
\author{Sean B. Lynch}

\newcommand{\Addresses}{{
  \bigskip
  \footnotesize
  \textsc{Sean B. Lynch, Institute for Theoretical Sciences, Westlake University, Hangzhou, Zhejiang, China}\par\nopagebreak
  \textit{E-mail address}: \texttt{sblynch@westlake.edu.cn}

}}

\begin{document}

\begin{abstract}
We prove an effective version of Solomon's first conjecture for lattices over orders in finite-dimensional semisimple algebras over nonarchimedean local fields. We express the quotient of a partial Solomon zeta function by the corresponding maximal-order zeta function as a finite sum whose terms are determined by finite module-theoretic data and weighted by polynomials defined using the M\"obius function of finite submodule posets. The resulting expression is independent of the chosen maximal overorder. Our proof is purely algebraic and is first formulated for the refined Bushnell--Reiner zeta functions. As an application, we obtain explicit formulas for the Solomon zeta functions of all lattices over $\mathbb{Z}_p[\mathbb{Z}/p\mathbb{Z}]$, including non-projective lattices.
\end{abstract}

\maketitle

\section{Introduction}

The Solomon zeta function of a lattice over an arithmetic order enumerates its finite-index submodules \cite{Solomon1977}. Hey gave an explicit formula for Solomon zeta functions over maximal orders \cite{Hey27}. The problem of computing Solomon zeta functions over non-maximal orders has attracted considerable attention \cite{Reiner1980,BushnellReiner1981,Hironaka1981,Hironaka1985,Denert1990,Hanaki2001,Wittmann2004,Akihide2016,Hofmann2016,Hirasaka2018,Babei&Herman2023,Espinosa2023}.

By an Euler product formula, the global problem reduces to the local problem. Moreover, a global order is non-maximal at only finitely many places. Thus, computing Solomon zeta functions in general amounts to determining, at each such place, the quotient of the local zeta function over the non-maximal order by the corresponding zeta function over a maximal order. Bushnell and Reiner proved Solomon's first conjecture, which states that each such quotient is a Dirichlet polynomial \cite{Solomon1979,BushnellReiner1980full}. However, their proof is not effective: although it establishes that the quotient is a polynomial, it does not give an independently computable expression for that polynomial and therefore does not provide an effective means of computing the Solomon zeta function.

The purpose of this paper is to prove an effective version of Solomon's first conjecture by expressing this quotient explicitly in terms of polynomials determined by finite module-theoretic data. In particular, our formula provides an effective method for computing the Solomon zeta function. Our proof of the effective version of Solomon's first conjecture is independent of Bushnell and Reiner's proof of the original conjecture.

Let $R$ be the ring of integers in a nonarchimedean local field $K$, with residue field $\mathbb{F}_q$. Let $\Lambda$ be an $R$-order in a finite-dimensional semisimple $K$-algebra $A$. Let $L$ and $M$ be full left $\Lambda$-lattices in a finitely generated left $A$-module $V$. Solomon introduced the following elements of the formal power series ring $\mathbb{Z}\llbracket q^{-s}\rrbracket$:
$$\zeta_\Lambda(L;s) = \sum_{X\le_\Lambda L} |L/X|^{-s} \qquad \text{and} \qquad \zeta_\Lambda(L, M;s) = \sum_{\substack{X\le_\Lambda L\\ X\simeq M}} |L/X|^{-s},$$
where the first sum runs over all finite-index submodules $X$ of $L$, while the second is restricted to those $X$ that are isomorphic to $M$. The first series is the \textit{Solomon zeta function} of $L$, and the second is the \textit{partial Solomon zeta function} of $L$ with respect to $M$. It follows from the Jordan--Zassenhaus theorem \cite[Thm.~24.7]{CurtisReiner1981} that $\zeta_\Lambda(L;s)=\sum_{[M]} \zeta_{\Lambda}(L,M;s)$, where $[M]$ ranges over the finitely many isomorphism classes of full left $\Lambda$-lattices in $V$.

Solomon used the M\"obius function of the locally finite partially ordered set of finite-index submodules of $L$ to prove that $\zeta_\Lambda(L,M;s)$ lies in $\mathbb{Q}(q^{-s})$, and hence $\zeta_\Lambda(L;s)$ does as well \cite{Solomon1977}. Solomon later posed two conjectures concerning these zeta functions \cite{Solomon1979}. Bushnell and Reiner proved the first using $p$-adic zeta integrals \cite{BushnellReiner1980full}; Iyama proved the second using rejective chains of lattice categories \cite{Iyama2003}. 

Let us now recall Solomon's first conjecture in the form proved by Bushnell and Reiner. Choose a maximal $R$-order $\Lambda'$ in $A$ containing $\Lambda$. Hey's formula expresses $\zeta_{\Lambda'}(\Lambda'L;s)$ as an explicit product of geometric series; see Corollary \ref{corollary_hey's_formula}. In particular, $\zeta_{\Lambda'}(\Lambda'L;s)$ is a unit in $\mathbb{Z}\llbracket q^{-s}\rrbracket$ that is independent of the choice of $\Lambda'$. Bushnell and Reiner then proved that the quotient series 
 $$\frac{\zeta_\Lambda(L,M;s)}{\zeta_{\Lambda'}(\Lambda'L;s)}$$
lies in the polynomial ring $\mathbb{Z}[q^{-s}]$. Their method begins by expressing $\zeta_\Lambda(L,M;s)$ as a $p$-adic zeta integral. They then decompose the resulting
Schwartz--Bruhat function and reduce the integral to a sum over Hermite
normal forms, which can be compared directly with Hey's formula. Unfortunately, the required Schwartz--Bruhat function decomposition is not effective.

The main result of this paper gives an effective formula for the polynomial $\zeta_\Lambda(L,M;s)/\zeta_{\Lambda'}(\Lambda'L;s)$. Our proof returns to the tool originally introduced by Solomon: the M\"obius function of a locally finite submodule poset.

\begin{theorem}\label{intro_effective_solomon_first_conjecture}
Let $R$ be the ring of integers in a nonarchimedean local field $K$, with residue field $\mathbb{F}_q$. Let $\Lambda$ be an $R$-order in a finite-dimensional semisimple $K$-algebra $A$. Let $L$ and $M$ be full left $\Lambda$-lattices in a finitely generated left $A$-module $V$. Choose a maximal $R$-order $\Lambda'$ in $A$ containing $\Lambda$. Set
$L'=\Lambda'L$ and $I=\{a\in A: \Lambda'a\Lambda'\subseteq \Lambda\}$. Let $\mu$ be the M\"obius function of the
poset of submodules of the finite left
$\Lambda'/I$-module $L'/IL'$. Then we have the formula
\begin{align*}
    \frac{\zeta_\Lambda(L,M;s)}{\zeta_{\Lambda'}(L';s)}
    =
    \sum_{\substack{X\le_\Lambda L\\ X\simeq M \\ IL'\subseteq \Lambda'X}}
    |L/X|^{-s}
    Q_{\Lambda'}(\Lambda' X/IL';s),
\end{align*}
where, for $W=\Lambda'X/IL'$, we define
$$Q_{\Lambda'}(W;s):=\sum_{U\leq_{\Lambda'} W}\mu(U,W)|W/U|^{-s}.$$
Since the outer sum is finite and each
$Q_{\Lambda'}(W;s)$ belongs to $\mathbb{Z}[q^{-s}]$, the quotient
$\zeta_\Lambda(L,M;s)/\zeta_{\Lambda'}(L';s)$ also belongs to
$\mathbb{Z}[q^{-s}]$.
\end{theorem}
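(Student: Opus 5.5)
The plan is to convert the Hey-formula denominator into a Möbius convolution on the poset $\mathcal P$ of finite-index $\Lambda'$-sublattices of $L'$. I would then use the two-sided ideal $I$ to collapse everything onto the finite window $\{N\in\mathcal P: IL'\subseteq N\}$, which is the submodule poset of $L'/IL'$.

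\emph{Step 1 (grouping by $\Lambda'$-hull).} For $N\in\mathcal P$ set
$$a(N)=\sum_{\substack{X\le_\Lambda L,\ X\simeq M\\ \Lambda'X=N}}|L/X|^{-s}.$$
Then $\zeta_\Lambda(L,M;s)=\sum_{N\in\mathcal P}a(N)$. Because $I$ is a two-sided $\Lambda'$-ideal contained in $\Lambda$, we have $IN=I\Lambda'X=IX\subseteq\Lambda X=X$. Hence each $X$ with $\Lambda'X=N$ satisfies $IN\subseteq X\subseteq N$, so each $a(N)$ is a finite sum.

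\emph{Step 2 (the denominator as a convolution).} Over the complete local ring $R$, all full $\Lambda'$-lattices in $V$ are isomorphic, since $\Lambda'$ is maximal and hence hereditary with a unique simple lattice in each simple component. Therefore $\zeta_{\Lambda'}(N;s)=\zeta_{\Lambda'}(L';s)$ for every $N\in\mathcal P$, which is the content of Hey's formula (Corollary~\ref{corollary_hey's_formula}). In the incidence algebra of $\mathcal P$, the function $\phi(Y,N)=|N/Y|^{-s}$ is multiplicative, because indices multiply along chains. Its convolution inverse is therefore $\mu_{\mathcal P}(Y,N)|N/Y|^{-s}$. For $N\supseteq IL'$, the interval $[Y,N]$ of $\mathcal P$ is identified with the corresponding interval in the submodule poset of $L'/IL'$ whenever $Y\supseteq IL'$. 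Consequently $\sum_{U\le W}\mu(U,W)|W/U|^{-s}=Q_{\Lambda'}(W;s)$ is the truncation of this inverse to that window.

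\emph{Step 3 (main identity).} I would prove
$$\sum_{N}a(N)=\zeta_{\Lambda'}(L';s)\sum_{N\supseteq IL'}a(N)\,Q_{\Lambda'}(N/IL';s),$$
which is the claimed formula. Expanding the right side with $\zeta_{\Lambda'}(L';s)=\sum_{Y\le N'}|N'/Y|^{-s}$ for suitable $N'$ and applying Möbius inversion, the identity reduces to a \emph{transport property} of $a$. Let $N\in\mathcal P$ be arbitrary, let $N\supseteq IL'$, and let $Y\subseteq N$ be a $\Lambda'$-sublattice with $Y\cong N$. The contributions from $X$ with $\Lambda'X=Y$ should be obtained from those with $\Lambda'X=N$ by multiplying by $|N/Y|^{-s}$.

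The tool is a $\Lambda'$-isomorphism $\varphi:N\to Y$. It is in particular $\Lambda$-linear, preserves the isomorphism class of $X$ and the property $\Lambda'X=N$, and satisfies $|N/\varphi X|=|N/Y|$. Since $\varphi$ commutes with $I$, it also respects the window $IN\subseteq X$.

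\emph{Main obstacle.} The expected difficulty is the constraint $X\subseteq L$ rather than $X\subseteq L'$. The map $\varphi$ need not send $\Lambda$-sublattices of $L$ into $L$. I would first try to remove this by noting that $IL'\subseteq IΛ'L = IL\subseteq L$, so membership in $L$ is a condition modulo $IL'$. I would then choose the decomposition so that the transported lattices are compared only inside $L'/IL'$, counting with $|L/X|=|L'/X|\,|L'/L|^{-1}$.

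If that fails, the fallback is to run the inversion directly on pairs $(X,Y)$ with $Y$ a $\Lambda'$-sublattice of $\Lambda'X$. This amounts to summing over all $X\le_\Lambda L$ the expression $\sum_{Y\le\Lambda'X}\mu(Y,\Lambda'X)|\Lambda'X/Y|^{-s}$. Here one uses that only pairs with $Y\supseteq IL'$ survive: for $\Lambda'X\not\supseteq IL'$, the scaling by a uniformizer-type isomorphism makes the inner sums telescope to zero. Finiteness of the outer sum and $Q_{\Lambda'}\in\mathbb Z[q^{-s}]$ are then immediate, since $L'/IL'$ is finite.
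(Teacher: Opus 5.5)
You have the right architecture: group each $X$ by its $\Lambda'$-hull, use that every finite-index $\Lambda'$-sublattice of $L'$ is isomorphic to $L'$ together with M\"obius inversion on the window $L'/IL'$ (this is the paper's Proposition~\ref{second_prop}), and reduce to a transport property of $a$. But that transport property carries essentially all the content of the theorem, and as you state it it is false. You state it for an arbitrary $\Lambda'$-sublattice $Y\subseteq N$ with $N\supseteq IL'$. Taking $N=L'$, it would give $a(Y)=|L'/Y|^{-s}a(L')$ for all $Y$, and hence $\zeta_\Lambda(L,M;s)=a(L')\,\zeta_{\Lambda'}(L';s)$. Now take $L=\Lambda=\mathbb{Z}_p[\mathbb{Z}/p\mathbb{Z}]$ and sum over $M$. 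The quotient would be $\sum_{X\le\Lambda,\ \Lambda'X=\Lambda'}|\Lambda/X|^{-s}=1$: any such $X$ contains $I\Lambda'X=I=J(\Lambda')$, which forces $X=\Lambda$. Solomon's formula gives $1-p^{-s}+p^{1-2s}$ instead. If you carry out your Step~3 expansion carefully, the factor $\sum_{Y+IL'\subseteq T\subseteq N}\mu(T,N)$ vanishes unless $N=Y+IL'$. So what you actually need is $a(Y)=|Z/Y|^{-s}a(Z)$, and only for $Z=Y+IL'$.

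Even for that pair, you correctly note that an arbitrary $\Lambda'$-isomorphism need not preserve $X\subseteq L$, but you never get around this; ``comparing inside $L'/IL'$'' is not an argument. The missing idea is Proposition~\ref{first_prop}: for $Z=Y+IL'$ there is a $\Lambda'$-isomorphism $\phi:Y\to Z$ with $\phi(y)\equiv y\bmod IL'$. Since $IL'=IL\subseteq L$, such a $\phi$ maps $L\cap Y$ onto $L\cap Z$. It therefore restricts to a bijection $X\mapsto\phi(X)$ from $\{X\le_\Lambda L: X\simeq M,\ \Lambda'X=Y\}$ to the corresponding set for $Z$.

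The existence of $\phi$ is where maximality and two-sidedness of $I$ enter:
\begin{enumerate}
\item By Morita equivalence, reduce to $\Lambda'=\Delta$ with $I=\pi^c\Delta$.
\item By the elementary divisor theorem, write $Y=\bigoplus_j\Delta\pi^{a_j}v_j$ for a $\Delta$-basis $(v_j)$ of $L'$. Then $Z=\bigoplus_j\Delta\pi^{\min(a_j,c)}v_j$.
\item Send $\pi^{a_j}v_j\mapsto\pi^{\min(a_j,c)}v_j$.
\end{enumerate}
Once you have $\phi$, your bookkeeping $|L/X|=|L'/X|\,|L'/L|^{-1}$ does yield $|L/X|=|Z/Y|\,|L/\phi(X)|$, which is a valid substitute for Lemma~\ref{only_lemma}.

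Your fallback does not avoid the issue. For $\Lambda'X\not\supseteq IL'$, the inner sum $\sum_{Y\le\Lambda'X}\mu(Y,\Lambda'X)|\Lambda'X/Y|^{-s}$ equals $\zeta_{\Lambda'}(L';s)^{-1}\ne0$. So any cancellation must occur between different $X$, and pairing them requires the same identity-mod-$IL'$ isomorphism.
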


We may interpret the above formula for $\zeta_\Lambda(L,M;s)/\zeta_{\Lambda'}(L';s)$ as follows. Multiplication by $\zeta_{\Lambda'}(L';s)^{-1}$ restricts the defining sum of $\zeta_\Lambda(L,M;s)$ to those $X$ satisfying $IL'\subseteq\Lambda' X$ and weights each remaining term by $Q_{\Lambda'}(\Lambda'X/IL';s)$. Although the restriction and the individual weights depend on the chosen maximal overorder, the resulting weighted sum is independent of that choice. Scholium \ref{scholium_effective_formula} and Corollary \ref{corollary_last_effective_formula} give alternative interpretations better suited to explicit computation.

As an application of Corollary \ref{corollary_last_effective_formula}, we derive an explicit formula for the Solomon zeta function of every lattice over the group algebra
$\Lambda=\mathbb{Z}_p[\mathbb{Z}/p\mathbb{Z}]\simeq\mathbb{Z}_p\times_{\mathbb{F}_p}\mathbb{Z}_p[\omega]$,
where $p$ is a prime and $\omega$ is a primitive $p$-th root of unity. Our formula recovers the previously known cases and yields new formulae for the remaining lattices. The computation of Solomon zeta functions in this setting has a rich history. In his seminal paper, Solomon proved \cite{Solomon1977} that
$$
\zeta_\Lambda(\Lambda;s)=\frac{1-p^{-s}+p^{1-2s}}{(1-p^{-s})^2}.
$$
Reiner later recovered this formula using a method for parameterising left ideals in certain fibre-product rings \cite{Reiner1980}. Bushnell and Reiner subsequently obtained it using a $p$-adic integral method \cite{BushnellReiner1980full}. Using Morita equivalence, Wittmann reduced the computation of the Solomon zeta function of $\Lambda^{\oplus c}$ to the enumeration of left ideals in $M_c(\Lambda)$ and then applied Reiner's fibre-product method \cite{Wittmann2004}.

These methods do not appear to extend directly to non-projective
$\Lambda$-lattices. There are precisely three indecomposable
$\Lambda$-lattices up to isomorphism: $\mathbb{Z}_p$,
$\mathbb{Z}_p[\omega]$, and $\Lambda$
\cite[\S.~2]{HellerReiner1962}. Neither $\mathbb{Z}_p$ nor
$\mathbb{Z}_p[\omega]$ is projective as a $\Lambda$-module. Together
with the Krull--Schmidt--Azumaya theorem
\cite[Thm.~30.6]{CurtisReiner1981}, this classifies all
$\Lambda$-lattices up to isomorphism.

\begin{theorem}\label{intro_group_algebra}
Let $\Lambda=\mathbb{Z}_p[\mathbb{Z}/p\mathbb{Z}]\simeq\mathbb{Z}_p\times_{\mathbb{F}_p}\mathbb{Z}_p[\omega]$, where $p$ is a prime and $\omega$ is a primitive $p$-th root of unity. Let $L$ be a $\Lambda$-lattice, and let $a,b,c$ be the unique nonnegative integers such that $$L\simeq \mathbb{Z}_p^{\oplus a}\oplus\mathbb{Z}_p[\omega]^{\oplus b}\oplus\Lambda^{\oplus c}.$$ Let $\Lambda'\simeq\mathbb{Z}_p\times\mathbb{Z}_p[\omega]$ be the maximal $\mathbb{Z}_p$-order in $\mathbb{Q}_p[\mathbb{Z}/p\mathbb{Z}]$, and write $\binom{\alpha}{\beta}_p$ for the Gaussian binomial coefficient enumerating the $\beta$-dimensional subspaces of $\mathbb{F}_p^{\oplus\alpha}$. Then we have
$$\zeta_{\Lambda'}(\Lambda'L;s)=\prod_{h=0}^{a+c-1}(1-p^{h-s})^{-1}\prod_{i=0}^{b+c-1}(1-p^{i-s})^{-1},$$
and
    $$\frac{\zeta_\Lambda(L;s)}{\zeta_{\Lambda'}(\Lambda'L;s)}=\sum_{j=0}^c \binom{c}{j}_p p^{-(c-j)s}P_j^{a,b,c}(p,s)\prod_{k=0}^{j-1}(1-p^{k-s})^2,$$
    where
    $$P_j^{a,b,c}(p,s)=\sum_{\ell=0}^{\min(a+c-j,b+c-j)}\binom{a+c-j}{\ell}_p\binom{b+c-j}{\ell}_p |\GL_\ell(\mathbb{F}_p)|p^{\ell(j-s)}.$$
\end{theorem}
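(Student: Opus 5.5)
We apply Theorem \ref{intro_effective_solomon_first_conjecture}, summed over all isomorphism classes $[M]$. This gives
$$\frac{\zeta_\Lambda(L;s)}{\zeta_{\Lambda'}(L';s)}=\sum_{\substack{X\le_\Lambda L\\ IL'\subseteq \Lambda'X}}|L/X|^{-s}\,Q_{\Lambda'}(\Lambda'X/IL';s),$$
so the task is to enumerate this finite sum. Here $\Lambda'=\mathbb{Z}_p\times\mathbb{Z}_p[\omega]$ and $I=p\mathbb{Z}_p\times(1-\omega)\mathbb{Z}_p[\omega]$, the conductor, so that $\Lambda'/I\simeq\mathbb{F}_p\times\mathbb{F}_p$. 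Write $\pi=1-\omega$. Then $L'=\Lambda'L\simeq\mathbb{Z}_p^{a+c}\oplus\mathbb{Z}_p[\omega]^{b+c}$ and $L'/IL'\simeq\mathbb{F}_p^{a+c}\oplus\mathbb{F}_p^{b+c}$, a module over $\mathbb{F}_p\times\mathbb{F}_p$.

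The first formula, for $\zeta_{\Lambda'}(L';s)$, is Hey's formula (Corollary \ref{corollary_hey's_formula}) applied to each factor: a free $\mathbb{Z}_p$-module of rank $n$ has zeta function $\prod_{h=0}^{n-1}(1-p^{h-s})^{-1}$, and the same holds for $\mathbb{Z}_p[\omega]$, which is totally ramified with residue field $\mathbb{F}_p$.

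Next we compute $Q_{\Lambda'}$. The submodule lattice of $W=\mathbb{F}_p^{m}\oplus\mathbb{F}_p^{n}$ over $\mathbb{F}_p\times\mathbb{F}_p$ is a product of two subspace lattices. By the classical formula $\mu(U,W)=(-1)^kp^{\binom k2}$ for $\dim W/U=k$ and the $q$-binomial theorem, we get
$$Q_{\Lambda'}(W;s)=\prod_{k=0}^{m-1}(1-p^{k-s})\prod_{k=0}^{n-1}(1-p^{k-s}).$$
Hence $Q$ depends only on the two dimensions of $\Lambda'X/IL'$.

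The main work is parametrising the $X\le_\Lambda L$ with $IL'\subseteq\Lambda'X$. First, every such $X$ contains $I\Lambda'X\supseteq I^2L'$, so everything happens in a finite quotient. A cleaner route is to use the fibre-product description. A $\Lambda$-lattice is a triple $(X_1,X_2,\phi)$, where $X_1$ is a $\mathbb{Z}_p$-lattice, $X_2$ is a $\mathbb{Z}_p[\omega]$-lattice, and $X$ is the pullback over a common $\mathbb{F}_p$-quotient through partial maps $X_1/pX_1\twoheadrightarrow T\twoheadleftarrow X_2/\pi X_2$. For $L$, the projections are $L_1=\mathbb{Z}_p^{a+c}$ and $L_2=\mathbb{Z}_p[\omega]^{b+c}$, glued along a $c$-dimensional $T_L$. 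For $X\le L$ we have $\Lambda'X=X_1\oplus X_2$, and the condition $IL'\subseteq\Lambda'X$ says $pL_1\subseteq X_1$ and $\pi L_2\subseteq X_2$. So $X_1/pL_1$ and $X_2/\pi L_2$ are subspaces $S_1\subseteq\mathbb{F}_p^{a+c}$ and $S_2\subseteq\mathbb{F}_p^{b+c}$, and $X$ is a subdirect sublattice of $L\cap(X_1\oplus X_2)$ containing $X\cap(pX_1\oplus\pi X_2)$.

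I would organise the count by $j$, the dimension of the image of $X$ in the gluing quotient $T_L$. Choosing this $j$-dimensional subspace gives the factor $\binom{c}{j}_p$, and the index contributes $p^{-(c-j)s}$. What remains should decompose as follows.

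\begin{enumerate}[(i)]
\item The part of $X$ that glues $X_1$ and $X_2$ over a common $\ell$-dimensional quotient is determined by an $\ell$-dimensional quotient on each side together with an isomorphism between them. This gives $\binom{a+c-j}{\ell}_p\binom{b+c-j}{\ell}_p|\GL_\ell(\mathbb{F}_p)|$, with index weight $p^{\ell(j-s)}$; the $p^{\ell j}$ counts homomorphism choices compatible with the fixed $j$-part.
\item The free choices of $S_1$ and $S_2$, weighted by $p^{-\mathrm{codim}\,s}$ and multiplied by $Q$, telescope by the same $q$-binomial identity. What survives is $\prod_{k=0}^{j-1}(1-p^{k-s})^2$ from the $j$-dimensional glued part.
\end{enumerate}

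The main obstacle is this bookkeeping: proving that the sum over $S_1,S_2$ and the gluing data factors exactly as claimed, with the correct index exponents. I would first verify the case $a=b=0$, $c=1$ against Solomon's formula $\zeta_\Lambda(\Lambda;s)=\frac{1-p^{-s}+p^{1-2s}}{(1-p^{-s})^2}$, and the cases $c=0$, where $L=L'$ is already a $\Lambda'$-module.

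Alternatively, I would invoke Corollary \ref{corollary_last_effective_formula}, which is presumably designed to perform exactly this cancellation. It would reduce the computation to counting $\Lambda$-submodules of the finite module $L/I^2L'$ with prescribed images, so that only item (i) needs to be done by hand.
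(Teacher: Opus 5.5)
Your framework matches the paper's. You reduce via Corollary~\ref{corollary_last_effective_formula}, compute $Q_{\Lambda'}$ by Hall's M\"obius function and Cauchy's $q$-binomial theorem (Proposition~\ref{prop_Q}), handle item (i) with Goursat's lemma (Proposition~\ref{prop_P}), and organise by the $j$-dimensional image $B\subseteq T_L$ to get $\binom{c}{j}_p$. The gap is item (ii). It is the step that actually produces the formula, you only assert it, and as described it would not work.

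Write $\rho_1:\mathbb{F}_p^{a+c}\twoheadrightarrow T_L$ and $\rho_2:\mathbb{F}_p^{b+c}\twoheadrightarrow T_L$ for the gluing maps, so that $\dim\ker\rho_1=a$ and $\dim\ker\rho_2=b$. The $S_i$ are not free choices: the inner count vanishes unless $\rho_1(S_1)=\rho_2(S_2)$, so for fixed $B$ you sum only over $S_i$ with $\rho_i(S_i)=B$. Put $d_i=\dim(S_i\cap\ker\rho_i)$ and $(n_1,n_2)=(a,b)$. You then need four facts:
\begin{enumerate}[(a)]
\item The number of such $S_i$ is $\binom{n_i}{d_i}_p\,p^{j(n_i-d_i)}$: choose $S_i\cap\ker\rho_i$, then a splitting of $\rho_i^{-1}(B)/(S_i\cap\ker\rho_i)\to B$.
\item The index is $|L/(L\cap Z)|=p^{(c-j)+(a-d_1)+(b-d_2)}$. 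So the weight attached to $S_i$ must be $p^{-(n_i-d_i)s}$, its codimension inside $\rho_i^{-1}(B)$, not inside $\mathbb{F}_p^{a+c}$ resp.\ $\mathbb{F}_p^{b+c}$; the latter would count $c-j$ again.
\item $Q_{\Lambda'}(Z/IL';s)=\prod_{k<j}(1-p^{k-s})^2\prod_{i}\prod_{k<d_i}(1-p^{k+j-s})$.
\item The item (i) factor is independent of $d_1,d_2$. This holds because that count takes place in $Z/IZ$, whose components have dimensions $a+c$ and $b+c$ for every $Z$ since $Z\simeq L'$. It does not take place in $Z/IL'$, where your $S_i$ live.
\end{enumerate}

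With these four facts, the contribution of a fixed $B$ equals $p^{-(c-j)s}P_j^{a,b,c}(p,s)\prod_{k<j}(1-p^{k-s})^2$ times $\prod_{i=1}^2\sum_{d=0}^{n_i}\binom{n_i}{d}_p x^{n_i-d}\prod_{k=0}^{d-1}(1-p^kx)$, where $x=p^{j-s}$. The collapse then requires the identity
\[\sum_{d=0}^{n}\binom{n}{d}_p\, x^{n-d}\prod_{k=0}^{d-1}(1-p^kx)=1.\]
This is not the Cauchy $q$-binomial theorem you used for $Q$. The paper cites Simeonov for it; it also follows by induction on $n$ from $\binom{n}{d}_p=\binom{n-1}{d-1}_p+p^d\binom{n-1}{d}_p$. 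Without the splitting factor $p^{j(n_i-d_i)}$ from (a), the sums do not collapse when $j>0$.

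Your closing alternative also misreads Corollary~\ref{corollary_last_effective_formula}. Its second form only splits the mod-$I^2$ enumeration into a mod-$I$ sum over $Z$ and the inner polynomial $P_\iota(L;Z;s)$. It performs none of the cancellation in (ii); that cancellation is exactly what the proof of Theorem~\ref{theorem_application} carries out by hand.
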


We derive this theorem as a corollary of a more general result for lattices over orders satisfying certain Artin--Wedderburn conditions (Theorem \ref{theorem_application}).

More generally, we work throughout the paper with the refined Bushnell--Reiner zeta functions \cite{BushnellReiner1987} and deduce the corresponding results for Solomon zeta functions as corollaries. We introduce these zeta functions in Section \ref{section_br_zeta}.

In Section \ref{section_hey}, we prove that the $\Lambda$-composition factors of simple left $\Lambda'$-modules are independent of the choice of maximal overorder $\Lambda'\supseteq\Lambda$ (Proposition \ref{prop_independence_overorder}). The result and its proof are inspired by Bushnell and Reiner's argument \cite[Cor.~3.16]{BushnellReiner1987}. We streamline their method so that it is entirely algebraic and applies in a more general setting. The resulting proof relies on the structure theory of maximal orders \cite[Ch.~3 \& 5]{Reiner2003}. At the end of the section, we use this independence result to establish the version of Hey's formula required for our treatment of Solomon's first conjecture for Bushnell--Reiner zeta functions (Theorem \ref{bushnell-reiner-hey_theorem}). In particular, we invoke our abstract version of Hey's formula \cite[Prop.~A.1]{Lynch2026}, whose proof uses Solomon's M\"obius function method. 

We prove our main theorem (Theorem \ref{effective_solomons_first_conjecture}) in Section \ref{section_solomons_conjecture}. We begin with an elementary identity for composition factors, prove an invariance property of an auxiliary polynomial, and then use Möbius inversion to evaluate a second auxiliary sum. Both propositions rely on the structure theory of maximal orders, and the latter also uses M\"obius inversion. After proving the main theorem, we give a scholium and a corollary that are better suited to explicit computation.

Finally, in Section \ref{section_application}, we apply the effective formula to orders satisfying the Artin--Wedderburn conditions described above and derive the explicit formula for lattices over $\mathbb{Z}_p[\mathbb{Z}/p\mathbb{Z}]$.

\section{Dirichlet rings and Bushnell--Reiner zeta functions}\label{section_br_zeta}

\subsection{Dirichlet rings}

Let $R$ be a commutative semilocal ring.

Let $\Lambda$ be a module-finite $R$-algebra. Then $\Lambda$ is semilocal, and thus the set $\mathcal{S}(\Lambda)$ of isomorphism classes of simple left $\Lambda$-modules is finite. By the Jordan--H\"older theorem, the Grothendieck group $\mathcal{G}(\Lambda)$ of the category of finite-length left $\Lambda$-modules is the free abelian group on $\mathcal{S}(\Lambda)$. We write the group $\mathcal{G}(\Lambda)$ multiplicatively, so that $[M]=[M'][M'']$ for every short exact sequence $0\to M'\to M\to M''\to 0$ of finite length left $\Lambda$-modules. Let $\mathcal{M}(\Lambda)$ be the submonoid of $\mathcal{G}(\Lambda)$ generated by $\mathcal{S}(\Lambda)$. Thus $\mathbb{Z}[\mathcal{M}(\Lambda)]$ is naturally identified with the polynomial ring over $\mathbb{Z}$ in the variables indexed by $\mathcal{S}(\Lambda)$. We define the \textit{Dirichlet ring} $\Dir(\Lambda)$ as the completion of $\mathbb{Z}[\mathcal{M}(\Lambda)]$ with respect to the ideal generated by $\mathcal{S}(\Lambda)$. Then $\Dir(\Lambda)$ is naturally identified with the formal power series ring over $\mathbb{Z}$ in the variables indexed by $\mathcal{S}(\Lambda)$.

Both $\mathcal{M}$ and $\Dir$ are contravariant in $\Lambda$. Indeed, if $\varphi:\Lambda\to\Gamma$ is a homomorphism of module-finite
$R$-algebras, then restriction of scalars determines a monoid homomorphism
$$
\varphi^*:\mathcal{M}(\Gamma)\longrightarrow\mathcal{M}(\Lambda),
\qquad
[{}_\Gamma S]\longmapsto[{}_\Lambda S],
$$
on the simple generators $S$ of $\mathcal{M}(\Gamma)$. This is well-defined because every simple left $\Gamma$-module has finite length as an $R$-module and therefore, since every $\Lambda$-submodule is an $R$-submodule, also has finite length as a left $\Lambda$-module. Passing to completed monoid rings gives a continuous ring homomorphism $\varphi^*:\Dir(\Gamma)\longrightarrow \Dir(\Lambda)$. Thus $\Dir$ is a contravariant functor from the category of module-finite $R$-algebras to the category of topological rings.

\subsection{Bushnell--Reiner zeta functions}

Let $R$ be a commutative local ring whose maximal ideal is finitely generated and whose residue field is $\mathbb{F}_q$. 

Let $\Lambda$ be a module-finite $R$-algebra, and let $L$ and $M$ be finitely generated left $\Lambda$-modules. For each $g\in\mathcal{M}(\Lambda)$, there are only finitely many finite-index left $\Lambda$-submodules $X$ of $L$ satisfying $[L/X]=g$ \cite[p.~5]{Lynch2026}. Thus we may define the following elements of $\Dir(\Lambda)$:
$$Z_\Lambda(L)=\sum_{X\leq_\Lambda L} [L/X]
    \qquad \text{and} \qquad
    Z_\Lambda(L,M)=
    \sum_{\substack{X\leq_\Lambda L\\ X\simeq M}}[L/X],$$
where the first sum runs over all finite-index $\Lambda$-submodules $X$ of $L$, while the second is restricted to those $X$ that are isomorphic to $M$. The first series is the \textit{Bushnell--Reiner zeta function} of $L$, and the second is the \textit{partial Bushnell--Reiner zeta function} of $L$ with respect to $M$. 

Let $\varphi:R\to\Lambda$ be the structure homomorphism. Since $\mathcal{S}(R)=\{[\mathbb{F}_q]\}$, we have $\Dir(R)=\mathbb{Z}\llbracket [\mathbb{F}_q]\rrbracket$. There is therefore a topological ring isomorphism 
$$\psi:\Dir(R)\xrightarrow{\sim} \mathbb{Z}\llbracket q^{-s}\rrbracket, \qquad [\mathbb{F}_q]\mapsto q^{-s}.$$ 
Applying the continuous ring homomorphism $\psi\circ\varphi^*:\Dir(\Lambda)\to\mathbb{Z}\llbracket q^{-s}\rrbracket$ to the Bushnell--Reiner zeta functions defined above recovers the Solomon zeta functions $\zeta_\Lambda(L;s)$ and $\zeta_\Lambda(L,M;s)$ defined in the introduction, although here we are working in a slightly more general setting.

\section{Hey's formula and independence of the maximal overorder}\label{section_hey}

\begin{notation}
Let $R$ be a complete discrete valuation ring with field of fractions $K$. Let $\Lambda$ be an $R$-order in a finite-dimensional semisimple $K$-algebra $A$, and let $V$ be a finitely generated left $A$-module. For any two full left $\Lambda$-lattices $L$ and $M$ in $V$, we may define 
$$[L:M]:=\left[\frac{L}{L\cap M}\right]\left[\frac{M}{L\cap M}\right]^{-1}\in \mathcal{G}(\Lambda).$$
Then, for three full left $\Lambda$-lattices $L$, $M$, and $N$ in $V$, we have 
$$[L:M]=[M:L]^{-1} \qquad \text{and} \qquad [L:N]=[L:M][M:N].$$
Similarly, for any two full left $\Lambda$-lattices $L$ and $M$ in $V$, we may define 
$$\length_\Lambda(L:M):=\length_\Lambda\left(\frac{L}{L\cap M}\right)-\length_\Lambda\left(\frac{M}{L\cap M}\right)\in \mathbb{Z}.$$
\end{notation}

\begin{lemma}
      Let $R$ be a complete discrete valuation ring with field of fractions $K$. Let $\Lambda$ be an $R$-order in a finite-dimensional semisimple $K$-algebra $A$. Let $L$ and $M$ be full left $\Lambda$-lattices in a finitely generated left $A$-module $V$. Let $x,y\in \Aut_A(V)^{\op}$. Then we have the following identities in the multiplicative group $\mathcal{G}(\Lambda)$:
      $$[L:Ly]=[M:My] \qquad \text{and} \qquad [L:Lxy]=[L:Lx][L:Ly].$$
\end{lemma}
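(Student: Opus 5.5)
The plan is to reduce both identities to one observation: right multiplication by $y$ transports the whole configuration $L\supseteq L\cap M\subseteq M$ isomorphically onto $Ly\supseteq Ly\cap My\subseteq My$. After that, the cocycle identities $[L:M]=[M:L]^{-1}$ and $[L:N]=[L:M][M:N]$ from the preceding Notation do the rest.

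First I check that everything is well defined. An element $y\in\Aut_A(V)^{\op}$ acts on $V$ on the right as an $A$-linear automorphism, so $v\mapsto vy$ commutes with the left $\Lambda$-action. Hence for any full $\Lambda$-lattice $L$ in $V$, the image $Ly$ is a $\Lambda$-submodule of $V$. It is finitely generated over $R$, being a homomorphic image of $L$, and it spans $V$ over $K$ because $y$ is bijective and $K$-linear. So $Ly$ is again a full $\Lambda$-lattice, and the symbols $[L:Ly]$, $[Lx:Lxy]$, and so on, make sense. I use the convention that $Lxy=(Lx)y$, which is what the opposite-group notation encodes.

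The key step is to show that $[Ly:My]=[L:M]$. The map $\rho_y\colon V\to V$, $v\mapsto vy$, is a $\Lambda$-module automorphism of $V$. It therefore preserves intersections, giving $\rho_y(L\cap M)=Ly\cap My$. It also induces $\Lambda$-isomorphisms
\[
L/(L\cap M)\xrightarrow{\sim} Ly/(Ly\cap My)
\quad\text{and}\quad
M/(L\cap M)\xrightarrow{\sim} My/(Ly\cap My).
\]
Since classes in $\mathcal{G}(\Lambda)$ depend only on isomorphism type, the definition of $[\,\cdot:\cdot\,]$ gives $[Ly:My]=[L:M]$.

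The first identity then follows from the cocycle relation:
\[
[L:Ly]=[L:M]\,[M:My]\,[My:Ly]=[L:M]\,[M:My]\,[M:L]=[M:My].
\]
Here the middle equality uses $[My:Ly]=[M:L]$, which is the key step with the roles of $L$ and $M$ swapped. The last equality uses $[M:L]=[L:M]^{-1}$ together with the commutativity of $\mathcal{G}(\Lambda)$.

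For the second identity, I write
\[
[L:Lxy]=[L:Lx]\,[Lx:(Lx)y].
\]
I then apply the first identity to the pair of full lattices $Lx$ and $L$, which gives $[Lx:(Lx)y]=[L:Ly]$. Combining the two displays yields $[L:Lxy]=[L:Lx][L:Ly]$.

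There is no serious obstacle. The only points needing care are that $\rho_y$ is $\Lambda$-linear, which holds precisely because $y$ is $A$-linear, and that the order of composition in $\Aut_A(V)^{\op}$ matches the meaning of $Lxy$.
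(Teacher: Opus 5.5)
Your proposal is correct and follows essentially the same route as the paper. Both arguments transport $L\supseteq L\cap M\subseteq M$ by right multiplication by $y$ to get $[Ly:My]=[L:M]$, combine this with the cocycle relations to obtain the first identity, and then take $M=Lx$ to deduce the second; your chain $L\to M\to My\to Ly$ is only a cosmetic rearrangement of the paper's computation.
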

\begin{proof}
    For the first identity, we compute that 
    $$[L:Ly][Ly:My]=[L:My]=[L:M][M:My].$$
    Moreover, since right multiplication by $y$ on $V$ induces left $\Lambda$-module isomorphisms 
    $$\frac{L}{L\cap M}\simeq \frac{Ly}{Ly\cap My} \qquad \text{and} \qquad \frac{M}{L\cap M}\simeq \frac{My}{Ly\cap My},$$
    we have 
    $$[L:M]=[Ly:My].$$
    Hence,
    $$[L:Ly]=[L:M][M:My][Ly:My]^{-1}=[M:My].$$

    For the second identity, we start with
    $$[L:Lxy]=[L:Lx][Lx:Lxy].$$
    Taking $M=Lx$ in the previous identity, we have $[Lx:Lxy]=[L:Ly]$. Substituting this back yields $[L:Lxy]=[L:Lx][L:Ly]$, completing the proof.
\end{proof}

\begin{proposition}\label{prop_independence_overorder}
Let $R$ be a complete discrete valuation ring with field of fractions $K$. Let $\Lambda$ be an $R$-order in a finite-dimensional semisimple $K$-algebra $A$. Let $\Lambda'$ and $\Lambda''$ be maximal $R$-orders in $A$ such that $\Lambda'\supseteq\Lambda$ and $\Lambda''\supseteq\Lambda$. Let $(e_i)$ be a complete set of primitive central idempotents of $A$ (so that $1 = \sum e_i$ is the decomposition of $1$ into primitive orthogonal central idempotents). Put $\Lambda'_i=\Lambda'e_i$ and $\Lambda''_i=\Lambda''e_i$. Let $S'_i$ be a simple left $\Lambda'_i$-module and $S''_i$ be a simple left $\Lambda''_i$-module. Then $[S'_i]=[S''_i]$ in $\mathcal{G}(\Lambda)$.
\end{proposition}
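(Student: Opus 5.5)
The plan is to compute both classes from a single lattice quotient and compare them using the preceding lemma. Everything takes place inside the simple component $A_i=Ae_i$, viewed as a finitely generated left $A$-module $V=A_i$. Then $\Aut_A(V)^{\op}$ contains $A_i^\times$, acting by right multiplication. Both $\Lambda'_i$ and $\Lambda''_i$ are full left $\Lambda$-lattices in $V$, because $\Lambda\subseteq\Lambda'$ and $\Lambda e_i\subseteq\Lambda'_i$, and $\Lambda'_i$ is stable under left multiplication by $\Lambda$.

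First, I invoke the structure theory of maximal orders over a complete discrete valuation ring (Reiner, Ch.~3 and~5). We have $A_i\simeq \M_n(D)$ for a division algebra $D$ with unique maximal order $\Delta$ and a prime element $\pi_D\in\Delta$. Every maximal order in $A_i$ is conjugate to $\M_n(\Delta)$, and so $\Lambda''_i=u\Lambda'_iu^{-1}$ for some $u\in A_i^\times$. Under an identification $\Lambda'_i=\M_n(\Delta)$, the scalar matrix $\pi=\pi_D\cdot 1_n$ satisfies
\[
\operatorname{rad}\Lambda'_i=\pi\Lambda'_i=\Lambda'_i\pi
\qquad\text{and}\qquad
\Lambda'_i/\Lambda'_i\pi\simeq \M_n(\Delta/\pi_D\Delta)\simeq (S'_i)^{\oplus n}
\]
as left $\Lambda'_i$-modules. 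Here $S'_i$ is the unique simple $\Lambda'_i$-module up to isomorphism, so its choice is irrelevant. Restricting to $\Lambda$ gives $[\Lambda'_i:\Lambda'_i\pi]=[S'_i]^n$ in $\mathcal{G}(\Lambda)$. Conjugating by $u$ and setting $\pi''=u\pi u^{-1}$ gives in the same way $[\Lambda''_i:\Lambda''_i\pi'']=[S''_i]^n$, with the same $n$.

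Next, I apply the preceding lemma with $L=\Lambda'_i$, $M=\Lambda''_i$ and right multiplication by elements of $A_i^\times$. The first identity gives
\[
[\Lambda''_i:\Lambda''_i\pi'']=[\Lambda'_i:\Lambda'_i\pi''].
\]
The multiplicativity identity applied to $\pi''=u\pi u^{-1}$ then gives
\[
[\Lambda'_i:\Lambda'_iu\pi u^{-1}]=[\Lambda'_i:\Lambda'_iu]\,[\Lambda'_i:\Lambda'_i\pi]\,[\Lambda'_i:\Lambda'_iu^{-1}].
\]
The outer factors cancel, since $[L:Lu][L:Lu^{-1}]=[L:L]=1$. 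Hence $[S'_i]^n=[S''_i]^n$ in $\mathcal{G}(\Lambda)$. Because $\mathcal{G}(\Lambda)$ is a free abelian group, it is torsion-free, and so $[S'_i]=[S''_i]$.

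I expect the main obstacle to be the structural input: that all maximal orders in the simple component are conjugate by a unit of $A_i$, and that $\operatorname{rad}\Lambda'_i$ is generated on both sides by a central-looking element $\pi$ whose quotient is exactly $n$ copies of the simple module. Both facts require completeness of $R$, and I would cite them from Reiner rather than reprove them. The remaining steps are formal manipulations with the lemma.
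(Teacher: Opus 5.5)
Your proof is correct, but it matches the multiplicities differently from the paper.

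\textbf{The paper's route.} The paper picks $x_i\in\Lambda'_i\cap A_i^\times$ with $\Lambda'_i/\Lambda'_ix_i\simeq S'_i$. It applies the first identity of the lemma to the \emph{same} element $x_i$ on both lattices. This gives $[S'_i]=[S''_i]^{m_i}$, where the exponent $m_i=\length_{\Lambda''_i}(\Lambda''_i:\Lambda''_ix_i)$ is a priori unknown. The paper then shows $m_i=1$ in two steps. First it reruns the lemma with $R$ in place of $\Lambda$, which gives $\length_R(S'_i)=m_i\length_R(S''_i)$. Then it uses that conjugate maximal orders have simple modules of the same $R$-length.

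\textbf{Your route.} You transport the element by the conjugating unit, $\pi''=u\pi u^{-1}$. Then the quotient on the $\Lambda''_i$ side is known in advance to be $(S''_i)^{\oplus n}$, with the same $n$ as on the $\Lambda'_i$ side. You use the multiplicativity identity of the lemma to see that $y\mapsto[\Lambda'_i:\Lambda'_iy]$ is a homomorphism into an abelian group, hence invariant under conjugation. The paper's proof of this proposition never uses that identity. Torsion-freeness of $\mathcal{G}(\Lambda)$ then removes the exponent $n$.

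\textbf{Trade-off.} Your argument avoids the $R$-length bookkeeping entirely. The paper's needs only the existence of $x_i$ and the fact that conjugation preserves $R$-lengths, without tracking $u$.

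\textbf{A small simplification.} Replace $\pi$ by $x=\operatorname{diag}(\pi_D,1,\dots,1)$, for which $\Lambda'_i/\Lambda'_ix\simeq S'_i$, and set $x''=uxu^{-1}$. Your argument then gives $[S'_i]=[S''_i]$ directly, without appealing to torsion-freeness.
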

\begin{proof}
Let $A_i=Ae_i$. Choose $x_i\in A_i^\times\cap\Lambda_i'$ such that $\Lambda_i'/\Lambda_i'x_i\simeq S'_i$; such an element exists by the structure theory of maximal orders. By the previous lemma, applied to the two full left $\Lambda$-lattices $\Lambda'_i$ and $\Lambda_i''$ in $A_i$, we have
$$[\Lambda_i':\Lambda_i' x_i]=[\Lambda_i'':\Lambda_i'' x_i].$$
Because $S''_i$ is, up to isomorphism, the unique simple left $\Lambda_i''$-module, we have 
$$[\Lambda_i'':\Lambda_i''x_i]=[S_i'']^{m_i}, \qquad m_i=\length_{\Lambda_i''}(\Lambda_i'':\Lambda_i'' x_i).$$
Therefore, 
$$[S_i']=[S_i'']^{m_i},$$
and it remains to prove that $m_i=1$.

Applying the previous lemma again, this time regarding $\Lambda_i'$ and $\Lambda_i''$ as full $R$-lattices in the $K$-vector space $A_i$, we obtain
$$\length_R(\Lambda_i':\Lambda_i'x_i)=\length_R(\Lambda''_i:\Lambda''_i x_i).$$
The left-hand side is $\length_R(S_i')$. Since $S_i''$ is the unique simple left $\Lambda_i''$-module up to isomorphism, the right-hand side is $m_i\length_R(S_i'')$. Thus 
$$\length_R(S_i')=m_i\length_R(S_i'').$$
Finally, the maximal orders $\Lambda'_i$ and $\Lambda''_i$ are conjugate in $A_i$. Conjugation gives an $R$-algebra isomorphism between them and hence preserves the $R$-lengths of their simple modules. Therefore, $\length_R(S'_i)=\length_R(S_i'')$, so $m_i=1$, as desired.
\end{proof}

\begin{theorem}\label{bushnell-reiner-hey_theorem}
    Let $R$ be the ring of integers in a nonarchimedean local field $K$, with residue field $\mathbb{F}_q$. Let $\Lambda$ be an $R$-order in a finite-dimensional semisimple $K$-algebra $A$. Let $L$ be a full left $\Lambda$-lattice in a finitely generated left $A$-module $V$. 
    
    Let $(e_i)_{i\in I}$ be a complete set of primitive central idempotents of $A$. Put $A_i=Ae_i$, $V_i=A_iV$ and $\ell_i=\length_{A_i}(V_i)$. Write the Artin--Wedderburn decomposition $A_i\simeq\M_{n_i}(D_i)$ where $n_i$ is a positive integer and $D_i$ is a finite-dimensional division $K$-algebra. Let $\Delta_i$ be the unique maximal $R$-order in $D_i$, and put $q_i=|\Delta_i/J(\Delta_i)|$.
    
    Choose a maximal $R$-order $\Lambda'$ in $A$ containing $\Lambda$, and let $\iota:\Lambda\hookrightarrow\Lambda'$ be the inclusion. Put $L'=\Lambda'L$ and $\Lambda'_i=\Lambda'e_i$. Let $\lambda'_i\in \mathcal{S}(\Lambda')$ denote the unique isomorphism class of simple left $\Lambda'$-module supported on the $i$-th component. Then we have 
    $$\iota^*Z_{\Lambda'}(L')=\prod_{i\in I}\prod_{j=0}^{\ell_i-1}(1-q^j_i \iota^*\lambda'_i)^{-1}.$$ 
    In particular, $\iota^*Z_{\Lambda'}(L')$ is a unit in $\Dir(\Lambda)$ that is independent of the choice of $\Lambda'$. 
\end{theorem}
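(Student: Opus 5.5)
Since $\Lambda'=\prod_i\Lambda'_i$ and $V=\bigoplus_i V_i$, the plan is to compute $Z_{\Lambda'}(L')$ in $\Dir(\Lambda')$ component by component, apply $\iota^*$, and then use Proposition \ref{prop_independence_overorder} to remove the dependence on $\Lambda'$.

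First, $L'=\bigoplus_i e_iL'$, and every $\Lambda'$-submodule $X$ of finite index decomposes as $X=\bigoplus_i e_iX$. Hence $Z_{\Lambda'}(L')=\prod_i Z_{\Lambda'_i}(e_iL')$, where each factor is viewed in $\Dir(\Lambda')$ via the variable $\lambda'_i$. So we may assume $A=A_i$ is simple.

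Second, for a simple $A_i\simeq \M_{n_i}(D_i)$, every maximal order is conjugate to $\M_{n_i}(\Delta_i)$. Under Morita equivalence, full $\Lambda'_i$-lattices in $V_i$ correspond to free $\Delta_i$-lattices of rank $\ell_i$. The unique simple module $\lambda'_i$ corresponds to the residue division ring $\Delta_i/J(\Delta_i)$, which has $q_i$ elements. Finite-index submodules of $e_iL'$ then correspond to finite-index $\Delta_i$-submodules of $\Delta_i^{\ell_i}$, with $[e_iL'/X]=(\lambda_i')^{m}$ where $m$ is the $\Delta_i$-length of the quotient. Counting these is Hey's formula. Here I would invoke the abstract version \cite[Prop.~A.1]{Lynch2026}, proved via Solomon's Möbius method. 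The count is $\prod_{j=0}^{\ell_i-1}(1-q_i^j\lambda'_i)^{-1}$, which in Hermite normal form terms is a sum over diagonal valuations weighted by $q_i$-powers. This gives the identity in $\Dir(\Lambda')$, and applying the continuous ring homomorphism $\iota^*$ yields the displayed formula.

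Finally, each factor $1-q_i^j\iota^*\lambda'_i$ has constant term $1$, so it is a unit in the power series ring $\Dir(\Lambda)$, and hence so is the product. Independence of the choice of $\Lambda'$: by Proposition \ref{prop_independence_overorder}, $\iota^*\lambda'_i=[S'_i]\in\mathcal{G}(\Lambda)$ agrees with the corresponding class for any other maximal overorder $\Lambda''$. The numbers $q_i$ and $\ell_i$ depend only on $A$ and $V$. So the right-hand side is unchanged.

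The main obstacle is the second step: correctly matching the hypotheses of the abstract Hey formula of \cite{Lynch2026}, namely the Morita reduction and the identification of the length of a $\Delta_i$-quotient with the exponent of $\lambda'_i$. If that result is stated only for $\Delta_i^{\ell}$, I would first check that $e_iL'$ is Morita-equivalent to a free $\Delta_i$-lattice of rank $\ell_i$. This holds because $\Delta_i$ is a noncommutative DVR, so every lattice over it is free, and the rank is fixed by $\dim_{D_i}$.
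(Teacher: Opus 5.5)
Your proposal is correct and follows the paper's proof: compute $Z_{\Lambda'}(L')$ via the abstract Hey formula \cite[Prop.~A.1]{Lynch2026}, apply the continuous homomorphism $\iota^*$, and deduce independence of $\Lambda'$ from Proposition~\ref{prop_independence_overorder}, using that $q_i$ and $\ell_i$ depend only on $A$ and $V$. The only difference is that you carry out the componentwise and Morita reduction to $\Delta_i^{\oplus\ell_i}$ explicitly, whereas the paper passes the structure-theory input directly into the abstract formula, namely that every finite-index $\Lambda'$-submodule of $L'$ is isomorphic to $L'$ and that $L'/J(\Lambda')L'\simeq\bigoplus_i(\mathbb{F}_{q_i}^{\oplus n_i})^{\oplus\ell_i}$.
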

\begin{proof}
By the structure theory of maximal $R$-orders, every finite-index $\Lambda'$-submodule of $L'$ is isomorphic to $L'$, and 
    $$\Lambda'/J(\Lambda')\simeq \prod_{i\in I}\M_{n_i}(\mathbb{F}_{q_i}), \qquad L'/J(\Lambda')L'\simeq \bigoplus_{i\in I}(\mathbb{F}_{q_i}^{\oplus n_i})^{\oplus\ell_i}.$$
    The abstract form of Hey's formula \cite[Prop.~A.1]{Lynch2026} therefore gives
    $$Z_{\Lambda'}(L')=\prod_{i\in I}\prod_{j=0}^{\ell_i-1}(1-q^j_i \lambda'_i)^{-1}.$$
    Applying $\iota^*$ yields 
    $$\iota^*Z_{\Lambda'}(L')=\prod_{i\in I}\prod_{j=0}^{\ell_i-1}(1-q^j_i \iota^*\lambda'_i)^{-1}.$$ 
Proposition \ref{prop_independence_overorder} shows that $\iota^*\lambda'_i$ is independent of the chosen maximal overorder $\Lambda'$. Hence the entire product is independent of $\Lambda'$. 
\end{proof}

By applying the topological ring homomorphism $\Dir(\Lambda)\to\mathbb{Z}\llbracket q^{-s}\rrbracket: [S]\mapsto|S|^{-s}$, we recover the following statement of Hey's formula as a corollary of Theorem \ref{bushnell-reiner-hey_theorem}. 

\begin{corollary}\label{corollary_hey's_formula}
    Keep the assumptions and notation of Theorem \ref{bushnell-reiner-hey_theorem}. Then 
    $$\zeta_{\Lambda'}(L';s)=\prod_{i\in I}\prod_{j=0}^{\ell_i-1}(1-q^{j-n_is}_i)^{-1}$$
    is a unit in $\mathbb{Z}\llbracket q^{-s}\rrbracket$, and it is independent of the choice of $\Lambda'$.
\end{corollary}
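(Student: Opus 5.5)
The plan is to apply the continuous ring homomorphism $\Theta:\Dir(\Lambda)\to\mathbb{Z}\llbracket q^{-s}\rrbracket$, $[S]\mapsto |S|^{-s}$, to the identity of Theorem \ref{bushnell-reiner-hey_theorem}, and then evaluate each factor. First, $\Theta$ is well defined: $|S|$ is a power of $q$ for every simple $\Lambda$-module $S$, since $S$ is a finite-length $R$-module killed by the maximal ideal, hence an $\mathbb{F}_q$-vector space. Second, $\Theta$ is multiplicative on $\mathcal{M}(\Lambda)$ because cardinality is multiplicative in short exact sequences; it therefore extends to the polynomial ring, and by continuity to the completion, since each variable goes into the ideal $(q^{-s})$. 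Third, $\Theta$ sends $[L'/X]$ to $|L'/X|^{-s}$. It follows that $\Theta(\iota^*Z_{\Lambda'}(L'))=\zeta_{\Lambda'}(L';s)$, because restriction of scalars does not change the underlying set.

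Applying $\Theta$ to the product formula of Theorem \ref{bushnell-reiner-hey_theorem} gives
$$\zeta_{\Lambda'}(L';s)=\prod_{i\in I}\prod_{j=0}^{\ell_i-1}\bigl(1-q_i^j\,|S'_i|^{-s}\bigr)^{-1},$$
where $S'_i$ is a simple $\Lambda'_i$-module of class $\lambda'_i$. It remains to compute $|S'_i|$. By the structure used in the proof of Theorem \ref{bushnell-reiner-hey_theorem}, $\Lambda'_i/J(\Lambda'_i)\simeq \M_{n_i}(\mathbb{F}_{q_i})$, so $S'_i\simeq \mathbb{F}_{q_i}^{\oplus n_i}$ and $|S'_i|=q_i^{n_i}$. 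Hence $|S'_i|^{-s}=q_i^{-n_is}$, and each factor becomes $(1-q_i^{j-n_is})^{-1}$, which is the stated formula.

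The unit claim is immediate, since each factor $1-q_i^{j-n_is}$ has constant term $1$; note that $q_i$ is a power of $q$, so $q_i^{-n_is}$ is a power of $q^{-s}$. For independence of $\Lambda'$, there are two routes. One is that the right-hand side involves only $n_i,q_i,\ell_i$, which are intrinsic to $A$ and $V$. The other is to apply $\Theta$ to the independence statement in Theorem \ref{bushnell-reiner-hey_theorem}. The only point needing care is the identification $|S'_i|=q_i^{n_i}$, and that comes directly from the residue algebra description of the maximal order.
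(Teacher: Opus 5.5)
Your proposal is correct and follows the paper's route: you specialize the identity of Theorem \ref{bushnell-reiner-hey_theorem} along $[S]\mapsto|S|^{-s}$, using $|S'_i|=q_i^{n_i}$ from $\Lambda'_i/J(\Lambda'_i)\simeq\M_{n_i}(\mathbb{F}_{q_i})$. Your first independence argument is also valid: the right-hand side depends only on $n_i,q_i,\ell_i$, which are determined by $A$ and $V$. At the level of Solomon zeta functions, this avoids Proposition \ref{prop_independence_overorder}, on which the paper's route relies.
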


\section{An effective version of Solomon's first conjecture}\label{section_solomons_conjecture}

Let $R$ be the ring of integers in a nonarchimedean local field $K$, with residue field $\mathbb{F}_q$. Let $\Lambda$ be an $R$-order in a finite-dimensional semisimple $K$-algebra $A$. Let $L$ and $M$ be full left $\Lambda$-lattices in a finitely generated left $A$-module $V$. Choose a maximal $R$-order $\Lambda'$ in $A$ such that $\Lambda'\supseteq\Lambda$, and let $\iota:\Lambda\hookrightarrow\Lambda'$ be the inclusion. Write $L'=\Lambda'L$ and $I=\{a\in A: \Lambda'a\Lambda'\subseteq \Lambda\}$. Let $\mu$ be the M\"obius function of the partially ordered set of left $\Lambda'/I$-submodules of $L'/IL'$ \cite[\S.~8.6]{JacobsonBasicAlgebraI}. 

\begin{lemma}\label{only_lemma}
    Let $X$ be a finite-index $\Lambda$-submodule of $L$, let $Y=\Lambda'X$, and let $Z=Y+IL'$. Then we have 
    $$[L/X]=[L/(L\cap Z)][Z/Y][(L\cap Y)/X]$$
    in $\mathcal{M}(\Lambda)$.
\end{lemma}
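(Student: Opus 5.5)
The plan is to refine the quotient $L/X$ along a chain of $\Lambda$-submodules and then identify one of the middle factors with $Z/Y$ by a second isomorphism theorem argument.

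First we check that everything makes sense. Since $X\subseteq L$ and $X\subseteq\Lambda'X=Y$, we get the chain of left $\Lambda$-modules
$$X\subseteq L\cap Y\subseteq L\cap Z\subseteq L.$$
Since $X$ has finite index in $L$, some power $\pi^nL$ of a uniformizer multiple lies in $X$. Then $\pi^nL'=\Lambda'\pi^nL\subseteq Y\subseteq Z$, so $Y$ and $Z$ are finite-index $\Lambda'$-submodules of $L'$ and all the quotients involved have finite length. Because $[\,\cdot\,]$ is multiplicative on short exact sequences, the chain gives
$$[L/X]=[L/(L\cap Z)]\,[(L\cap Z)/(L\cap Y)]\,[(L\cap Y)/X]$$
in $\mathcal{M}(\Lambda)$. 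It therefore suffices to prove $(L\cap Z)/(L\cap Y)\simeq Z/Y$ as left $\Lambda$-modules, where $Z/Y$ is regarded as a $\Lambda$-module by restriction.

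For this we use the key observation that $IL'\subseteq L$. Indeed $I$ is a two-sided $\Lambda'$-ideal contained in $\Lambda$, since $1\in\Lambda'$. Hence
$$IL'=I\Lambda'L=IL\subseteq\Lambda L=L.$$
Consequently $IL'\subseteq L\cap Z$, and so
$$Y+(L\cap Z)\supseteq Y+IL'=Z.$$
The reverse inclusion is clear, so $Y+(L\cap Z)=Z$.

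Now apply the second isomorphism theorem to the inclusion $L\cap Z\to Z\to Z/Y$. Its kernel is $(L\cap Z)\cap Y=L\cap Y$, using $Y\subseteq Z$, and its image is $(Y+(L\cap Z))/Y=Z/Y$. This gives the required $\Lambda$-isomorphism and hence the identity.

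The only nontrivial point is the inclusion $IL'\subseteq L$, which relies on $I$ being a $\Lambda'$-ideal inside $\Lambda$. The remaining steps are routine bookkeeping.
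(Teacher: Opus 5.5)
Your proof is correct and follows the paper's argument: refine along $X\subseteq L\cap Y\subseteq L\cap Z\subseteq L$, show $(L\cap Z)+Y=Z$ using $IL'\subseteq L\cap Z$, and conclude by the second isomorphism theorem. Your justification that $IL'=IL\subseteq L$ (because $I$ is a two-sided $\Lambda'$-ideal contained in $\Lambda$) and your finiteness check make explicit two steps the paper leaves implicit.
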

\begin{proof}
    Since $L\supseteq L\cap Z\supseteq L\cap Y\supseteq X$, we have 
    $$[L/X]=[L/(L\cap Z)][(L\cap Z)/(L\cap Y)][(L\cap Y)/X].$$
    So, it suffices to show that $(L\cap Z)/(L\cap Y)\simeq Z/Y$ as left $\Lambda$-modules. To this end, we observe that $(L\cap Z)+Y\subseteq Z$ and $(L\cap Z)+Y\supseteq IL'+Y=Z$, so that $(L\cap Z)+Y=Z$. Therefore, we have
    $$\frac{L\cap Z}{L\cap Y}=\frac{L\cap Z}{(L\cap Z)\cap Y}\cong \frac{(L\cap Z)+Y}{Y}=Z/Y,$$
    as desired.
\end{proof}

\begin{proposition}\label{first_prop}
    Let $Y$ be a finite-index $\Lambda'$-submodule of $L'$. Define
        $$P_{\iota}(L,M;Y):=\sum_{\substack{X\leq_\Lambda L\\ X\simeq M\\ \Lambda'X=Y}}[(L\cap Y)/X].$$
    Then the defining sum is finite, so $P_\iota(L,M;Y)\in\mathbb{Z}[\mathcal{M}(\Lambda)]$. Moreover,
        $$P_{\iota}(L,M;Y)=P_{\iota}(L,M;Y+IL').$$
\end{proposition}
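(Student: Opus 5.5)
The plan is to use the ideal $I$, to reduce finiteness to a bound sandwiching each $X$, and then to build an explicit bijection between the index sets of $P_\iota(L,M;Y)$ and $P_\iota(L,M;Z)$, where $Z=Y+IL'$. The bijection will be right multiplication by a suitable $u\in\Aut_A(V)^{\op}$, and it should preserve the terms $[(L\cap Y)/X]$.

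First the preliminary facts. $I$ is a two-sided $\Lambda'$-ideal contained in $\Lambda$. It is full, since $r^N\Lambda'\subseteq\Lambda$ for some $N$, so $I\supseteq r^N\Lambda'$. Hence $IL'=I\Lambda'L=IL\subseteq L$. Moreover, if $\Lambda'X=Y$, then $IY=I\Lambda'X=IX\subseteq X$. Thus every $X$ in the sum defining $P_\iota(L,M;Y)$ satisfies $IY\subseteq X\subseteq L\cap Y$. Because $IY\supseteq r^NY$ has finite index in $Y$, there are only finitely many such $X$, which gives finiteness. Also $Z$ is a finite-index $\Lambda'$-submodule of $L'$, so both sides of the identity make sense.

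For the identity, suppose we have $u\in\Aut_A(V)^{\op}$ with $Yu=Z$ and $yu-y\in IL'$ for all $y\in Y$. Then for $z=yu\in Z$ we get $zu^{-1}-z=y-yu\in IL'$, so $u^{-1}$ has the symmetric property. Given $X$ in the index set for $Y$, put $X'=Xu$. We check each condition in turn:
\begin{itemize}
\item $X'\simeq X\simeq M$.
\item $\Lambda'X'=(\Lambda'X)u=Z$.
\item $X'\subseteq X+IL'\subseteq L$.
\end{itemize}
So $X\mapsto Xu$ maps the first index set to the second, and $u^{-1}$ gives the inverse. For the terms we use the lemma on $[L:Ly]$ together with the argument of Lemma \ref{only_lemma}, which gives $(L\cap Z)+Y=Z$ and hence $[(L\cap Z)/(L\cap Y)]=[Z/Y]$. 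We obtain
$$[(L\cap Z)/Xu]=[L\cap Z:L\cap Y]\,[L\cap Y:X]\,[X:Xu]=[Z/Y]\,[(L\cap Y)/X]\,[Y:Yu],$$
and $[Y:Yu]=[Y:Z]=[Z/Y]^{-1}$. So the terms match and $P_\iota(L,M;Y)=P_\iota(L,M;Y+IL')$.

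The main obstacle is the existence of $u$, namely a $\Lambda'$-isomorphism $Y\to Z$ congruent to the identity modulo $IL'$. I would proceed componentwise via the central idempotents $e_i$. On the $i$-th component, $\Lambda'_i\simeq\M_{n_i}(\Delta_i)$ and $Ie_i=\pi_i^{k_i}\Lambda'_i$, because two-sided ideals of a local maximal order are powers of the radical. Morita equivalence then reduces everything to free $\Delta_i$-lattices $Y_i\subseteq Z_i$ of equal rank with $Z_i=Y_i+\pi_i^{k_i}L'_i$.

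Next, choose a $\Delta_i$-basis $(z_j)$ of $Z_i$ and write $z_j=y_j+w_j$ with $y_j\in Y_i$ and $w_j\in\pi_i^{k_i}L'_i$. We want to define $u^{-1}$ by $z_j\mapsto y_j$. This map is automatically congruent to the identity modulo $\pi_i^{k_i}L'_i$, but it must be an isomorphism onto $Y_i$, i.e.\ the $y_j$ must form a basis of $Y_i$. I would first try to choose the $y_j$ using an adapted (Smith/elementary divisor) basis of $Y_i\subseteq Z_i$. The freedom available is to modify the $y_j$ by elements of $Y_i\cap\pi_i^{k_i}L'_i$. The goal is to verify via a Nakayama-type argument that $\sum\Delta_iy_j=Y_i$. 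If that fails, the fallback is a different bijection, for instance $X\mapsto X+IL'$, combined with an independent check that isomorphism classes are preserved.
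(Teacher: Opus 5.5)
Your finiteness argument is the paper's. Your reduction of the identity to the existence of a $\Lambda'$-isomorphism $u\colon Y\to Z=Y+IL'$ with $yu-y\in IL'$ for all $y\in Y$ is also exactly the paper's reduction. Your comparison of terms through $[X:Xu]=[Y:Yu]$ is valid but unnecessary. Since $IL'\subseteq L$, such a $u$ carries $L\cap Y$ onto $L\cap Z$, so it induces $(L\cap Y)/X\cong(L\cap Z)/Xu$ directly.

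The gap is that you never establish the existence of $u$, and that is the real content of the proposition. Your Nakayama argument does not go through as stated. Writing $z_j=y_j+w_j$ only gives $\sum_j\Delta_iy_j+(Y_i\cap\pi_i^{k_i}L_i')=Y_i$. In general $Y_i\cap\pi_i^{k_i}L_i'$ is not contained in $\pi_iY_i$: if $Y_i\subseteq\pi_i^{k_i}L_i'$ it is all of $Y_i$, and then Nakayama tells you nothing. So you still have to explain how to choose the corrections. Your fallback $X\mapsto X+IL'$ does not work either. It is in general not injective (when $Y\subseteq IL'$, every admissible $X$ is sent to $IL'$). It need not preserve the isomorphism class of $X$. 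It also changes the term $[(L\cap Y)/X]$ by the factor $[(Y\cap IL')/(X\cap IL')]^{-1}$.

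The missing idea is to take elementary divisors of $Y$ relative to $L'$, not relative to $Z$. After the Morita reduction to $\Lambda'=\Delta$ with $I=\Delta\pi^k=\pi^k\Delta$, Smith normal form gives a $\Delta$-basis $v_1,\dots,v_n$ of $L'$ and integers $a_j\ge 0$ with $Y=\bigoplus_j\Delta\pi^{a_j}v_j$. Then $Z=Y+\pi^kL'=\bigoplus_j\Delta\pi^{\min(a_j,k)}v_j$. The assignment $\pi^{a_j}v_j\mapsto\pi^{\min(a_j,k)}v_j$ defines an isomorphism $Y\to Z$. It fixes the generators with $a_j\le k$ and moves the others by $(\pi^k-\pi^{a_j})v_j\in\pi^kL'=IL'$.

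Alternatively, your lifting idea can be completed as follows. Lift the surjection $Z\to Z/IL'\cong Y/(Y\cap IL')$ to some $\psi_0\colon Z\to Y$. Then add a map $h\colon Z\to Y\cap IL'$ chosen so that $\psi_0+h$ is bijective modulo $\pi$. This is possible because the images of $\psi_0(Z)$ and of $Y\cap IL'$ together span $Y/\pi Y$, and $Z/\pi Z$ and $Y/\pi Y$ have the same dimension. Nakayama then shows that $\psi_0+h\colon Z\to Y$ is an isomorphism congruent to the identity modulo $IL'$. Either way, this construction is exactly the step your proposal leaves undone.
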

\begin{proof}
If $X$ is a $\Lambda$-submodule of $L$ satisfying $\Lambda'X=Y$, then we have $IY= I\Lambda'X =IX\subseteq X\subseteq L\cap Y$. Moreover, the quotient module $Y/IY$ is finite. Hence, there are only finitely many terms in the sum defining $P_{\iota}(L,M;Y)$, and so it lies in $\mathbb{Z}[\mathcal{M}(\Lambda)]$.

If $Z$ is another finite-index $\Lambda'$-submodule of $L'$ for which there exists a left $\Lambda'$-module isomorphism $\phi: Y\xrightarrow{\sim} Z$ satisfying $\phi(L\cap Y)=L\cap Z$, then we have 
$$P_{\iota}(L,M;Y)=P_{\iota}(L,M;Z).$$
So, it suffices to show that we have such an isomorphism for $Z=Y+IL'$. In fact, we claim that there exists a left $\Lambda'$-module isomorphism $\phi:Y\xrightarrow{\sim} Y+IL'$ such that $\phi(y)\equiv y \bmod IL'$ for all $y\in Y$. If this were true, then we would have $\phi(L\cap Y)=L\cap (Y+IL')$ because $IL'\subseteq L$.

Our construction is componentwise with respect to the simple factors of the semisimple $K$-algebra $A$, and after applying the standard Morita equivalence for a maximal order in a simple algebra, it is enough to treat the case where $\Lambda'=\Delta$ is the maximal $R$-order in a finite-dimensional division $K$-algebra $A=D$. Choose a uniformiser $\pi$ for $\Delta$. Then there exists a nonnegative integer $c$ such that $I=\Delta\pi^c=\pi^c\Delta$. Moreover, by Smith normal form over $\Delta$ \cite[Thm.~17.7]{Reiner2003}, there exist a left $\Delta$-basis $v_1,\ldots,v_n$ of $L'$ and nonnegative integers $a_1,\ldots,a_n$ such that $Y=\bigoplus_j \Delta \pi^{a_j} v_j$. Therefore, $Y+IL'=\bigoplus_j \Delta \pi^{\min(a_j,c)} v_j$. Hence, we may define our desired left $\Delta$-module isomorphism $\phi:Y\xrightarrow{\sim} Y+IL'$ by taking $\pi^{a_j} v_j\mapsto \pi^{\min(a_j,c)} v_j$. If $a_j\le c$, then $\phi(\pi^{a_j}v_j)=\pi^{a_j}v_j$. If $a_j>c$, then $\phi(\pi^{a_j}v_j)-\pi^{a_j}v_j=(\pi^c-\pi^{a_j})v_j\in\pi^cL'=IL'$. Therefore, $\phi(y)\equiv y\bmod IL'$ for every $y\in Y$, as required.
\end{proof}

\begin{proposition}\label{second_prop}
Let $Z$ be a $\Lambda'$-submodule of $L'$ containing $IL'$ and let $W=Z/IL'$. Define 
    $$Q_{\Lambda'}(W):=\sum_{U\le_{\Lambda'} W}\mu(U,W)[{}_{\Lambda'}W/U]\in\mathbb{Z}[\mathcal{M}(\Lambda')].$$
Then we have
    $$\sum_{\substack{Y\le {}_{\Lambda'}L'\\ Y+IL'=Z}}[Z/Y]=\iota^*\left(Q_{\Lambda'}(W) Z_{\Lambda'}(L')\right).$$
\end{proposition}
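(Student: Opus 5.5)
The plan is to prove the identity first in $\Dir(\Lambda')$ and then apply the continuous ring homomorphism $\iota^*$. This is legitimate because for a $\Lambda'$-module $Z/Y$ of finite length, its class in $\mathcal{M}(\Lambda)$ is $\iota^*[{}_{\Lambda'}Z/Y]$. For each $\Lambda'$-submodule $T$ of $L'$ containing $IL'$, put
$$F(T):=\sum_{\substack{Y\le_{\Lambda'}L'\\ Y+IL'=T}}[T/Y]\in\Dir(\Lambda').$$
This is a well-defined element: for fixed $g\in\mathcal{M}(\Lambda')$ only finitely many $Y$ have $[T/Y]=g$, since $T$ is finitely generated. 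We must show that $F(Z)=Q_{\Lambda'}(W)Z_{\Lambda'}(L')$, where $W=Z/IL'$.

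\textbf{Step 1: a summatory identity.} Take $T\supseteq IL'$ as above. It has finite index in $L'$, because $IL'$ does: $I$ contains a power of a uniformiser of $R$. By the structure theory of maximal orders, as recalled in the proof of Theorem~\ref{bushnell-reiner-hey_theorem}, we have $T\simeq L'$ as $\Lambda'$-modules. Hence
$$Z_{\Lambda'}(T)=Z_{\Lambda'}(L').$$
Next, partition the finite-index submodules $Y\le_{\Lambda'}T$ according to $T':=Y+IL'$. Since $IL'\subseteq T$, a submodule $Y\le L'$ lies in $T$ if and only if $Y+IL'\subseteq T$. Moreover $[T/Y]=[T/T'][T'/Y]$. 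Rearranging the absolutely convergent sum in the power series ring therefore gives
$$Z_{\Lambda'}(L')=\sum_{IL'\subseteq T'\subseteq T}[T/T']\,F(T').$$
Passing to the finite poset of $\Lambda'$-submodules of $L'/IL'$, write $f(U)=F(\tilde U)$, where $\tilde U$ is the preimage of $U$. Then for every $W_0\le L'/IL'$ we obtain
$$\sum_{U\le W_0}[W_0/U]\,f(U)=Z_{\Lambda'}(L').$$

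\textbf{Step 2: weighted M\"obius inversion.} Work in the incidence algebra of the finite poset $P$ of submodules of $L'/IL'$, with coefficients in the commutative ring $\Dir(\Lambda')$. Define the kernel $\zeta_w(U,W_0)=[W_0/U]$ for $U\le W_0$. Because $[W_0/V][V/U]=[W_0/U]$ for $U\le V\le W_0$, the kernel $\zeta_w$ is obtained from the ordinary zeta function by the multiplicative twist $(U,W_0)\mapsto[W_0/U]$. Such twists respect convolution, so $\zeta_w$ has inverse $(U,W_0)\mapsto\mu(U,W_0)[W_0/U]$. Inverting Step~1 then yields
$$f(W)=\sum_{U\le W}\mu(U,W)[W/U]\,Z_{\Lambda'}(L')=Q_{\Lambda'}(W)\,Z_{\Lambda'}(L').$$
Applying $\iota^*$ gives the proposition.

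\textbf{Expected obstacle.} The main point needing care is Step~1. One must check that every $T\supseteq IL'$ is isomorphic to $L'$, so that the right-hand side is independent of $T$; this is what makes the answer $Q_{\Lambda'}(W)Z_{\Lambda'}(L')$. One must also justify the rearrangement of the infinite sums in $\Dir(\Lambda')$, which holds because each coefficient involves only finitely many $Y$. The twisted M\"obius inversion is routine once the multiplicativity $[W_0/V][V/U]=[W_0/U]$ is noted.
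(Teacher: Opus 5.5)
Your proof is correct and is essentially the paper's argument: both partition the $Y$ by $Y+IL'$, both use that every $T\supseteq IL'$ is isomorphic to $L'$ so that $Z_{\Lambda'}(T)=Z_{\Lambda'}(L')$, and both invert over the submodule poset of $L'/IL'$. The only difference is bookkeeping. The paper keeps the weight $[Z/Y]$ relative to the fixed top $Z$, so that $G(U)=[W/U]\,\iota^*Z_{\Lambda'}(L')$ and ordinary M\"obius inversion suffices. You instead normalise each $F(T)$ relative to $T$ and absorb $[W_0/U]$ into the twisted M\"obius function $\mu(U,W_0)[W_0/U]$. The two are equivalent, since your $f(U)$ multiplied by $[W/U]$ is exactly the paper's $F(U)$.
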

\begin{proof}
For each $\Lambda'$-submodule $U$ of $W$, we define $$F(U):=\sum_{\substack{Y\le {}_{\Lambda'}L'\\ \frac{Y+IL'}{IL'}=U}}[Z/Y], \qquad G(U):=\sum_{\substack{Y\le {}_{\Lambda'}L'\\ \frac{Y+IL'}{IL'}\subseteq U}}[Z/Y].$$
Note that we have
$$G(U)=\sum_{T\le_{\Lambda'} U} F(T)$$
for each $U$. Therefore, by M\"obius inversion \cite[Cor.~1, p.~483]{JacobsonBasicAlgebraI}, we also have
$$F(U)=\sum_{T\le_{\Lambda'} U} \mu(T,U) G(T)$$
for all $U$. In particular,
$$\sum_{\substack{Y\le {}_{\Lambda'}L'\\ Y+IL'=Z}}[Z/Y]=F(W)=\sum_{U\le_{\Lambda'} W} \mu(U,W) G(U).$$
Hence, it now suffices to show that 
$$G(U)=[W/U]\iota^* Z_{\Lambda'}(L')$$
for all $U$.

To this end, fix $U$ and let $Z'$ be the left $\Lambda'$-submodule of $L'$ containing $IL'$ such that $Z'/IL'=U$. Then we have 
$$G(U)=\sum_{\substack{Y\le {}_{\Lambda'}L'\\ \frac{Y+IL'}{IL'}\subseteq U}}[Z/Y]=\sum_{Y\le {}_{\Lambda'}Z'}[Z/Y].$$
If $Y$ is a finite-index $\Lambda'$-submodule of $Z'$, then $Z\supseteq Z'\supseteq Y$ and $W/U\cong Z/Z'$, so that
$$[Z/Y]=[Z/Z'][Z'/Y]=[W/U][Z'/Y].$$
Substituting this into our last formula for $G(U)$, we find that 
$$G(U)=[W/U]\sum_{Y\le {}_{\Lambda'}Z'}[Z'/Y]=[W/U]\iota^* Z_{\Lambda'}(L').$$
Indeed, the last equality follows from the fact that $Z'\simeq L'$ as left $\Lambda'$-modules.
\end{proof}

We now come to our main theorem.

\begin{theorem}\label{effective_solomons_first_conjecture}
Let $R$ be the ring of integers in a nonarchimedean local field $K$, with residue field $\mathbb{F}_q$. Let $\Lambda$ be an $R$-order in a finite-dimensional semisimple $K$-algebra $A$. Let $L$ and $M$ be full left $\Lambda$-lattices in a finitely generated left $A$-module $V$. Choose a maximal $R$-order $\Lambda'$ in $A$ containing $\Lambda$, and let $\iota:\Lambda\hookrightarrow\Lambda'$ be the inclusion. Write $L'=\Lambda'L$ and $I=\{a\in A: \Lambda'a\Lambda'\subseteq \Lambda\}$. Let $\mu$ be the M\"obius function of the partially ordered set of left $\Lambda'/I$-submodules of $L'/IL'$. 

Then $\iota^*Z_{\Lambda'}(L')$ is a unit in $\Dir(\Lambda)$ that is independent of our choice of $\Lambda'$. Moreover, we have the formula
    $$\frac{Z_\Lambda(L,M)}{\iota^* Z_{\Lambda'}(L')}=\sum_{\substack{X\leq_{\Lambda} L\\ X\simeq M\\ IL'\subseteq\Lambda'X}}[L/X]\iota^*Q_{\Lambda'}(\Lambda'X/IL'),$$
where, with $W=\Lambda'X/IL'$,
$$Q_{\Lambda'}(W):=\sum_{U\leq_{\Lambda'} W}\mu(U,W)[{}_{\Lambda'}W/U]\in\mathbb{Z}[\mathcal{M}(\Lambda')].$$
Since the sum over $X$ is finite and
$\iota^*Q_{\Lambda'}(W)\in\mathbb{Z}[\mathcal{M}(\Lambda)]$, it follows that the quotient series $Z_\Lambda(L,M)/\iota^* Z_{\Lambda'}(L')$ lies in $\mathbb{Z}[\mathcal{M}(\Lambda)]$.
\end{theorem}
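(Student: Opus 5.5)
The first assertion, that $\iota^*Z_{\Lambda'}(L')$ is a unit independent of $\Lambda'$, is exactly Theorem \ref{bushnell-reiner-hey_theorem}, so we only need to prove the formula. The plan is to regroup the sum defining $Z_\Lambda(L,M)$ in two stages: first according to $Y=\Lambda'X$, and then according to $Z=Y+IL'$. Lemma \ref{only_lemma} then splits each term, Proposition \ref{first_prop} moves the sum over $X$ from $Y$ to $Z$, and Proposition \ref{second_prop} evaluates the remaining sum over $Y$.

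Each finite-index $X\le_\Lambda L$ determines a finite-index $\Lambda'$-submodule $Y=\Lambda'X$ of $L'$, and hence $Z=Y+IL'$, which contains $IL'$. By Lemma \ref{only_lemma},
$$Z_\Lambda(L,M)=\sum_{\substack{Z\le_{\Lambda'}L'\\ IL'\subseteq Z}}[L/(L\cap Z)]\sum_{\substack{Y\le_{\Lambda'}L'\\ Y+IL'=Z}}\iota^*[Z/Y]\,P_\iota(L,M;Y).$$
Proposition \ref{first_prop} gives $P_\iota(L,M;Y)=P_\iota(L,M;Z)$ whenever $Y+IL'=Z$. This factor therefore comes out of the inner sum, and Proposition \ref{second_prop} evaluates what remains:
$$Z_\Lambda(L,M)=\iota^*Z_{\Lambda'}(L')\sum_{\substack{Z\supseteq IL'}}[L/(L\cap Z)]\,P_\iota(L,M;Z)\,\iota^*Q_{\Lambda'}(Z/IL').$$
The outer sum runs over the finitely many submodules of the finite module $L'/IL'$. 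Each $P_\iota(L,M;Z)$ is a polynomial by Proposition \ref{first_prop}, and each $\iota^*Q_{\Lambda'}(Z/IL')$ is a polynomial by definition. So the quotient lies in $\mathbb{Z}[\mathcal{M}(\Lambda)]$.

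Next we unwind $P_\iota(L,M;Z)$. Its terms are indexed by $X\simeq M$ with $\Lambda'X=Z$, and for such $X$ we have $IL'\subseteq\Lambda'X$. Since $Y=Z$ in this situation, Lemma \ref{only_lemma} gives
$$[L/(L\cap Z)][(L\cap Z)/X]=[L/X].$$
Summing over $Z$ therefore recovers exactly the sum over $X\simeq M$ with $IL'\subseteq\Lambda'X$, each term weighted by $\iota^*Q_{\Lambda'}(\Lambda'X/IL')$. This is the stated formula.

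The main obstacle is justifying the rearrangements of infinite sums in $\Dir(\Lambda)$. I would handle this by noting that for each $g\in\mathcal{M}(\Lambda)$ only finitely many $X$ have $[L/X]=g$. Fixing the coefficient of $g$ then turns every rearrangement into a finite regrouping, since $[L/(L\cap Z)]$, $[Z/Y]$ and $[(L\cap Y)/X]$ all divide $g$ in $\mathcal{M}(\Lambda)$. The remaining steps are bookkeeping. The equality $\iota^*[Z/Y]=[Z/Y]$, regarded as a $\Lambda$-module class, holds because $\iota^*$ is restriction of scalars and is multiplicative on short exact sequences.
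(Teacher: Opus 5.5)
Your proposal is correct and follows the paper's proof essentially step for step. You use the same two-stage regrouping by $Y=\Lambda'X$ and $Z=Y+IL'$, split terms with Lemma \ref{only_lemma}, replace $P_\iota(L,M;Y)$ by $P_\iota(L,M;Z)$ via Proposition \ref{first_prop}, evaluate the $Y$-sum with Proposition \ref{second_prop}, and unwind $P_\iota(L,M;Z)$ at the end. The only minor differences are that you deduce polynomiality from the finitely many $Z\supseteq IL'$ rather than from $I^2L\subseteq X$, and that you spell out why the rearrangements in $\Dir(\Lambda)$ are legitimate, which the paper leaves implicit.
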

\begin{proof}
    The first assertion follows from Theorem \ref{bushnell-reiner-hey_theorem}: $\iota^*Z_{\Lambda'}(L')$ is a unit in $\Dir(\Lambda)$ that is independent of our choice of $\Lambda'$.

Now, we write
\begin{align*}
    Z_\Lambda(L,M)=\sum_{\substack{X\le_\Lambda L\\ X\simeq M}}[L/X]=\sum_{\substack{Y\le_{\Lambda'}L'}}\sum_{\substack{X\le_\Lambda L\\ X\simeq M\\ \Lambda'X=Y}}[L/X]=\sum_{\substack{Z\le_{\Lambda'} L'\\ IL'\subseteq Z}}\sum_{\substack{Y\le_{\Lambda'}L'\\ Y+IL'=Z}}\sum_{\substack{X\le_\Lambda L\\ X\simeq M\\ \Lambda'X=Y}}[L/X].
\end{align*}
So, by Lemma \ref{only_lemma}, we have
\begin{align*}
    Z_\Lambda(L,M)&=\sum_{\substack{Z\le_{\Lambda'} L'\\ IL'\subseteq Z}}[L/(L\cap Z)]\sum_{\substack{Y\le_{\Lambda'}L'\\ Y+IL'=Z}}[Z/Y]\sum_{\substack{X\le_\Lambda L\\ X\simeq M\\ \Lambda'X=Y}}[(L\cap Y)/X]\\
    &=\sum_{\substack{Z\le_{\Lambda'} L'\\ IL'\subseteq Z}}[L/(L\cap Z)]\sum_{\substack{Y\le_{\Lambda'}L'\\ Y+IL'=Z}}[Z/Y]P_\iota(L,M;Y).
\end{align*}
Thus, by Propositions \ref{first_prop} and \ref{second_prop}, we have
\begin{align*}
    Z_\Lambda(L,M)&=\sum_{\substack{Z\le_{\Lambda'} L'\\ IL'\subseteq Z}}[L/(L\cap Z)]\sum_{\substack{Y\le_{\Lambda'}L'\\ Y+IL'=Z}}[Z/Y]P_\iota(L,M;Z)\\
    &=\sum_{\substack{Z\le_{\Lambda'} L'\\ IL'\subseteq Z}}[L/(L\cap Z)]P_\iota(L,M;Z)\sum_{\substack{Y\le_{\Lambda'}L'\\ Y+IL'=Z}}[Z/Y]\\
    &=\sum_{\substack{Z\le_{\Lambda'} L'\\ IL'\subseteq Z}}[L/(L\cap Z)]P_\iota(L,M;Z)\iota^*\left(Q_{\Lambda'}(Z/IL') Z_{\Lambda'}(L')\right)\\
    &=\left(\iota^* Z_{\Lambda'}(L')\right)\sum_{\substack{Z\le_{\Lambda'} L'\\ IL'\subseteq Z}}[L/(L\cap Z)]P_\iota(L,M;Z)\iota^*Q_{\Lambda'}(Z/IL').
\end{align*}
Hence, dividing by $\iota^* Z_{\Lambda'}(L')$ and expanding $P_\iota(L,M;Z)$, we compute
\begin{align*}
    \frac{Z_\Lambda(L,M)}{\iota^* Z_{\Lambda'}(L')}&=\sum_{\substack{Z\le_{\Lambda'} L'\\ IL'\subseteq Z}}[L/(L\cap Z)]P_\iota(L,M;Z)\iota^*Q_{\Lambda'}(Z/IL')\\
    &=\sum_{\substack{Z\le_{\Lambda'} L'\\ IL'\subseteq Z}}[L/(L\cap Z)]\sum_{\substack{X\leq_\Lambda L\\ X\simeq M\\ \Lambda'X=Z}}[(L\cap Z)/X]\iota^*Q_{\Lambda'}(Z/IL')\\
    &=\sum_{\substack{Z\le_{\Lambda'} L'\\ IL'\subseteq Z}}\sum_{\substack{X\leq_\Lambda L\\ X\simeq M\\ \Lambda'X=Z}}[L/X]\iota^*Q_{\Lambda'}(Z/IL')\\
    &=\sum_{\substack{X\leq_{\Lambda} L\\ X\simeq M\\ IL'\subseteq\Lambda'X}}[L/X]\iota^*Q_{\Lambda'}(\Lambda'X/IL').
\end{align*}
Finally, if $IL'\subseteq\Lambda' X$, then $I^2L=I^2L'\subseteq I\Lambda'X=IX\subseteq X$. Since $L/I^2L$ is finite, only finitely many such submodules $X$ exist.
\end{proof}

Now, we see that we may obtain Theorem \ref{intro_effective_solomon_first_conjecture} as a corollary of Theorem \ref{effective_solomons_first_conjecture} by applying the topological ring homomorphism $\Dir(\Lambda)\to\mathbb{Z}\llbracket q^{-s}\rrbracket: [S]\mapsto |S|^{-s}$.

For explicit computation, it is generally preferable to retain the expression involving $P_\iota(L,M;Z)$, rather than expand it into a single sum over $X$. Indeed, multiplying $IL'\subseteq \Lambda'X$ on the left by $I$, we obtain $I^2L=I^2L'\subseteq I\Lambda'X=IX\subseteq X$. Thus, the sum over $X$ essentially requires us to work modulo $I^2$. However, we can break this sum into two steps, in each of which we work modulo $I$.

\begin{scholium}\label{scholium_effective_formula}
Keep the assumptions and notation of Theorem \ref{effective_solomons_first_conjecture}. Then we have the formulae
\begin{align*}
    \frac{Z_\Lambda(L,M)}{\iota^*Z_{\Lambda'}(L')}&=\sum_{\substack{Z\le_{\Lambda'} L'\\ IL'\subseteq Z}}[L/(L\cap Z)]P_\iota(L,M;Z)\iota^*Q_{\Lambda'}(Z/IL'),\\
    \frac{\zeta_\Lambda(L,M;s)}{\zeta_{\Lambda'}(L';s)}&=\sum_{\substack{Z\le_{\Lambda'} L'\\ IL'\subseteq Z}}|L/(L\cap Z)|^{-s}P_\iota(L,M;Z;s)Q_{\Lambda'}(Z/IL';s)
\end{align*}
where, for $W=Z/IL'$, we define
\begin{gather*}
    P_{\iota}(L,M;Z):=\sum_{\substack{X\leq_\Lambda L\\ X\simeq M\\ \Lambda'X=Z}}[(L\cap Z)/X], \qquad  P_{\iota}(L,M;Z;s):=\sum_{\substack{X\leq_\Lambda L\\ X\simeq M\\ \Lambda'X=Z}}|(L\cap Z)/X|^{-s},\\
    Q_{\Lambda'}(W;s):=\sum_{U\le_{\Lambda'} W}\mu(U,W)|W/U|^{-s}.
\end{gather*}
\end{scholium}

\begin{corollary}\label{corollary_last_effective_formula}
    Keep the assumptions and notation of Theorem \ref{effective_solomons_first_conjecture}. Then we have the formulae
    \begin{align*}
        \frac{Z_\Lambda(L)}{\iota^*Z_{\Lambda'}(L')}&=\sum_{\substack{X\leq_{\Lambda} L\\ IL'\subseteq\Lambda'X}}[L/X]\iota^*Q_{\Lambda'}(\Lambda'X/IL')\\
        &=\sum_{\substack{Z\le_{\Lambda'} L'\\ IL'\subseteq Z}}[L/(L\cap Z)]P_\iota(L;Z)\iota^*Q_{\Lambda'}(Z/IL'),\\
        \frac{\zeta_\Lambda(L;s)}{\zeta_{\Lambda'}(L';s)}&=\sum_{\substack{X\leq_{\Lambda} L\\ IL'\subseteq\Lambda'X}}|L/X|^{-s}Q_{\Lambda'}(\Lambda'X/IL';s)\\
        &=\sum_{\substack{Z\le_{\Lambda'} L'\\ IL'\subseteq Z}}|L/(L\cap Z)|^{-s}P_\iota(L;Z;s)Q_{\Lambda'}(Z/IL';s),
    \end{align*}
   where, for $W=Z/IL'$, we define
\begin{gather*}
    P_{\iota}(L;Z):=\sum_{\substack{X\leq_\Lambda L\\ \Lambda'X=Z}}[(L\cap Z)/X], \qquad  P_{\iota}(L;Z;s):=\sum_{\substack{X\leq_\Lambda L\\ \Lambda'X=Z}}|(L\cap Z)/X|^{-s},\\
    Q_{\Lambda'}(W;s):=\sum_{U\le_{\Lambda'} W}\mu(U,W)|W/U|^{-s}.
\end{gather*}
\end{corollary}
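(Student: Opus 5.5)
The plan is to deduce Corollary \ref{corollary_last_effective_formula} from Theorem \ref{effective_solomons_first_conjecture} and Scholium \ref{scholium_effective_formula} by summing over the finitely many isomorphism classes $[M]$ of full left $\Lambda$-lattices in $V$, and then to pass to Solomon zeta functions by applying $\Dir(\Lambda)\to\mathbb{Z}\llbracket q^{-s}\rrbracket$, $[S]\mapsto|S|^{-s}$.

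\textbf{Step 1: reduce to finitely many partial zeta functions.} By the Jordan--Zassenhaus theorem there are only finitely many such classes. Every finite-index $\Lambda$-submodule $X$ of $L$ is itself a full $\Lambda$-lattice in $V$, so it lies in exactly one class. Hence
$$Z_\Lambda(L)=\sum_{[M]}Z_\Lambda(L,M)$$
as a finite sum in $\Dir(\Lambda)$.

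\textbf{Step 2: sum the formulas of the theorem and the scholium.} Divide by the unit $\iota^*Z_{\Lambda'}(L')$ and apply Theorem \ref{effective_solomons_first_conjecture} to each class. The condition $X\simeq M$, summed over all $[M]$, becomes the condition that $X$ is an arbitrary finite-index submodule. This gives the first identity. For the second identity, use Scholium \ref{scholium_effective_formula} in the same way, noting that
$$\sum_{[M]}P_\iota(L,M;Z)=P_\iota(L;Z).$$
Each sum is finite: $P_\iota(L;Z)$ involves only $X$ with $IZ\subseteq X\subseteq L\cap Z$, as in Proposition \ref{first_prop}, and the outer sum involves only $X\supseteq I^2L$. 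So the interchange of summation is just rearranging finite sums.

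\textbf{Step 3: specialise to Solomon zeta functions.} Apply the continuous ring homomorphism $[S]\mapsto|S|^{-s}$. It is multiplicative, so it sends $[L/X]$ to $|L/X|^{-s}$. It sends $\iota^*Z_{\Lambda'}(L')$ to $\zeta_{\Lambda'}(L';s)$, because the composite $\Dir(\Lambda')\to\Dir(\Lambda)\to\mathbb{Z}\llbracket q^{-s}\rrbracket$ sends $[S]$ to $|S|^{-s}$; cardinality does not depend on which ring acts. It sends $\iota^*Q_{\Lambda'}(W)$ to $Q_{\Lambda'}(W;s)$, and $P_\iota(L;Z)$ to $P_\iota(L;Z;s)$. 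Since ring homomorphisms preserve units and quotients, the last two identities follow.

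There is no substantive obstacle. The only points needing care are the finiteness of $[M]$, so that the sums can be interchanged, and the compatibility of the specialisation map with $\iota^*$.
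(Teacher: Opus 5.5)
Your proposal is correct and takes the same route as the paper. The paper states the corollary without proof right after Scholium \ref{scholium_effective_formula}. It is meant to follow by summing Theorem \ref{effective_solomons_first_conjecture} and the scholium over the finitely many classes $[M]$, using Jordan--Zassenhaus as in the introduction together with $\sum_{[M]}P_\iota(L,M;Z)=P_\iota(L;Z)$, and then specialising via $[S]\mapsto|S|^{-s}$, with your finiteness checks ($IZ\subseteq X$ and $I^2L\subseteq X$) and the compatibility of the specialisation with $\iota^*$ handled correctly.
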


\section{An explicit formula for lattices over certain orders}\label{section_application}

Let $R$ be the ring of integers in a nonarchimedean local field $K$. Let $\Lambda$ be an $R$-order in a finite-dimensional semisimple $K$-algebra $A$. Let $L$ be a left $\Lambda$-lattice. Let $\Lambda'$ be a maximal $R$-order in $A$ containing $\Lambda$, and let $\iota:\Lambda\hookrightarrow\Lambda'$ be the inclusion. Put $L'=\Lambda'L$ and $I=\{x\in A: \Lambda'x\Lambda'\subseteq\Lambda\}$. 

Let $\kappa=\mathbb{F}_q$, and let $\kappa_1,\kappa_2$ be copies of $\kappa$. Suppose that $I=J(\Lambda')=J(\Lambda)$, and that the embedding $\Lambda/I\hookrightarrow\Lambda'/I$ is isomorphic to the diagonal embedding $\kappa\hookrightarrow\kappa_1\times\kappa_2$. Put $$M=L'/IL', \qquad M_1=\kappa_1M, \qquad M_2=\kappa_2M, \qquad N=L/IL.$$
Noting that $IL'=IL$, so that $N$ is a $\kappa$-submodule of $M$, we put
$$m_1=\dim_{\kappa_1}(M_1), \qquad m_2=\dim_{\kappa_2}(M_2), \qquad c=\dim_\kappa(M/N).$$
Since $(\kappa_1\times\kappa_2)N=M$, it follows from Goursat's lemma for modules \cite[p.~171]{Lambek1966} that there exists a $\kappa$-module $C$ and epimorphisms $\rho_1:M_1\twoheadrightarrow C$, $\rho_2:M_2\twoheadrightarrow C$ such that $$N= M_1\times_C M_2:=\{(x_1,x_2)\in M_1\oplus M_2: \rho_1(x_1)=\rho_2(x_2)\}.$$ 
In particular, $c=\dim_\kappa(C)\le\min(m_1,m_2)$.

\begin{proposition}\label{prop_P}
    Let $Z$ be a finite-index $\Lambda'$-submodule of $L'$ containing $IL'$, and let $j=\dim_\kappa(\frac{Z}{L\cap Z})$. Then $j\le c$. Recall from Corollary \ref{corollary_last_effective_formula} the polynomial $P_\iota(L;Z;s)$. If $(\kappa_1\times\kappa_2)\frac{L\cap Z}{IL'}\ne \frac{Z}{IL'}$, then we have
    $$P_\iota(L;Z;s)=0.$$
    Otherwise, if $(\kappa_1\times\kappa_2)\frac{L\cap Z}{IL'}=\frac{Z}{IL'}$, then we have the formula
    $$P_\iota(L;Z;s)=\sum_{\ell=0}^{\min(m_1-j,m_2-j)}\binom{m_1-j}{\ell}_q\binom{m_2-j}{\ell}_q |\GL_\ell(\mathbb{F}_q)|q^{\ell(j-s)}.$$
    Here, $\binom{\alpha}{\beta}_q$ is the Gaussian binomial coefficient enumerating the $\beta$-dimensional subspaces of $\mathbb{F}_q^{\oplus\alpha}$.
\end{proposition}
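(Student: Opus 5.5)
The plan is to turn $P_\iota(L;Z;s)$ into a count of $\kappa$-subspaces of a finite $\kappa_1\times\kappa_2$-module, and then do the count using Goursat's lemma.

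\emph{The bound $j\le c$.} We have $Z/(L\cap Z)\cong (Z+L)/L\subseteq L'/L$. Since $IL'=IL$, we get $L'/L\cong M/N$, which has $\kappa$-dimension $c$. Hence $j\le c$.

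\emph{Reduction to linear algebra.} Take $X\le_\Lambda L$ with $\Lambda'X=Z$. Then $IZ=I\Lambda'X=IX\subseteq X\subseteq L\cap Z$. Also $IZ\subseteq IL'\subseteq L\cap Z$. Put $\bar Z=Z/IZ$ and $\bar L=(L\cap Z)/IZ$. Since $\Lambda/I=\kappa$ and $\Lambda'/I=\kappa_1\times\kappa_2$, the assignment $X\mapsto \bar X=X/IZ$ is a bijection onto the $\kappa$-subspaces $\bar X\subseteq\bar L$. Because $I=J(\Lambda')$, Nakayama's lemma shows that $\Lambda'X=Z$ holds if and only if $(\kappa_1\times\kappa_2)\bar X=\bar Z$. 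Therefore
$$P_\iota(L;Z;s)=\sum_{\substack{\bar X\le_\kappa \bar L\\ (\kappa_1\times\kappa_2)\bar X=\bar Z}} q^{-s\dim_\kappa(\bar L/\bar X)}.$$

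\emph{The vanishing case.} Suppose $(\kappa_1\times\kappa_2)(L\cap Z)/IL'\neq Z/IL'$. For any such $X$ we have $X\subseteq L\cap Z$, so the image of $\Lambda'X$ in $Z/IL'$ is proper. Hence no term occurs and $P_\iota(L;Z;s)=0$.

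\emph{Structure of $\bar L$ in the other case.} Suppose now $(\kappa_1\times\kappa_2)(L\cap Z)/IL'=Z/IL'$. Since $IL'\subseteq L\cap Z$, this gives $\Lambda'(L\cap Z)=Z$, so $(\kappa_1\times\kappa_2)\bar L=\bar Z$. We have $Z\cong L'$, because finite-index submodules over a maximal order are isomorphic to the whole lattice. Hence $\bar Z=\bar Z_1\oplus\bar Z_2$ with $\dim\bar Z_i=m_i$. By Goursat's lemma, as in the setup before this proposition, $\bar L=\bar Z_1\times_{C'}\bar Z_2$ for epimorphisms $\rho_i:\bar Z_i\twoheadrightarrow C'$. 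Moreover $\dim C'=\dim(\bar Z/\bar L)=j$.

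\emph{The count.} A subspace $\bar X$ with both projections surjective is again a Goursat graph. It is determined by kernels $K_i\le\bar Z_i$ together with an isomorphism $\theta:\bar Z_1/K_1\to\bar Z_2/K_2$. The condition $\bar X\subseteq\bar L$ means exactly that $K_i\subseteq\ker\rho_i$ and $\rho_2\theta=\rho_1$. Write $\ell=\dim(\ker\rho_i/K_i)$; this is the same for $i=1,2$ since $\theta$ is an isomorphism. Then $\theta$ must carry $\ker\rho_1/K_1$ isomorphically onto $\ker\rho_2/K_2$ and induce the identity on $C'$.

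The number of such $\theta$ is $|\GL_\ell(\mathbb{F}_q)|\,q^{j\ell}$. The factor $q^{j\ell}$ counts the choices of lift of $\mathrm{id}_{C'}$, which form a torsor under $\Hom(C',\ker\rho_2/K_2)$. The number of pairs $(K_1,K_2)$ is $\binom{m_1-j}{\ell}_q\binom{m_2-j}{\ell}_q$.

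Finally, $\dim\bar X=m_1+\dim K_2=m_1+m_2-j-\ell$. Hence $\dim(\bar L/\bar X)=\ell$, and each such $\bar X$ contributes $q^{-\ell s}$. Summing over $\ell$ gives the stated formula.

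\emph{Main obstacle.} The delicate step is parametrizing the $\theta$ compatible with the fibre-product condition and seeing that they form a torsor of size $|\GL_\ell|q^{j\ell}$. I would verify this by choosing splittings $\bar Z_i/K_i\cong C'\oplus\ker\rho_i/K_i$ and writing $\theta$ as a block matrix with fixed identity block on $C'$.
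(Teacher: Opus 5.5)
Your proof is correct and follows essentially the same route as the paper: you pass to $\kappa$-subspaces of $(L\cap Z)/IZ$ via Nakayama, write this space as a Goursat fibre product over a $j$-dimensional space, and count the admissible triples (kernels $K_1,K_2$ and compatible isomorphism $\theta$), giving $\binom{m_1-j}{\ell}_q\binom{m_2-j}{\ell}_q|\GL_\ell(\mathbb{F}_q)|q^{j\ell}$ terms of weight $q^{-\ell s}$. Your block-matrix check for the count of $\theta$ is exactly the splitting argument the paper sketches.
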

\begin{proof}
We have $Z/(L\cap Z)\cong (L+Z)/L\subseteq L'/L\cong M/N$ as $\kappa$-modules, and so $j\le c$. Now, write $V=(L\cap Z)/IZ$ and $W=Z/IZ$. Then we have 
$$P_{\iota}(L;Z;s)=\sum_{\substack{U\leq_\kappa V\\ (\kappa_1\times\kappa_2)U=W}}\left|V/U\right|^{-s}.$$
Moreover, we have the equivalences
\begin{align*}
    (\kappa_1\times\kappa_2)V=W&\iff \Lambda'(L\cap Z)+IZ=Z \iff \Lambda'(L\cap Z)=Z \\ &\iff \Lambda'(L\cap Z)+IL'=Z\iff(\kappa_1\times\kappa_2)\frac{L\cap Z}{IL'}=\frac{Z}{IL'}.
\end{align*}
The second equivalence follows from $I=J(\Lambda')$ and Nakayama's lemma, while the third equivalence follows from $IL'=IL\subseteq L\cap Z$.

If $(\kappa_1\times\kappa_2)\frac{L\cap Z}{IL'}\neq\frac{Z}{IL'}$, then $(\kappa_1\times\kappa_2)U\subseteq (\kappa_1\times\kappa_2)V\ne W$ for every
$\kappa$-submodule $U$ of $V$. In this case, the sum defining $P_{\iota}(L;Z;s)$ is empty and thus $P_{\iota}(L;Z;s)=0$.

Suppose now that $(\kappa_1\times\kappa_2)\frac{L\cap Z}{IL'}=\frac{Z}{IL'}$, so that $(\kappa_1\times\kappa_2)V=W$. Write $W_1=\kappa_1W$, $W_2=\kappa_2W$. Since $Z\simeq L'$ as left $\Lambda'$-modules, we have $W\simeq M$ as $\kappa_1\times \kappa_2$-modules, and so we have $\dim_{\kappa_1}(W_1)=m_1$, $\dim_{\kappa_2}(W_2)=m_2$. By Goursat's lemma for modules, there exists a $\kappa$-module $B$ and epimorphisms $\psi_1:W_1\twoheadrightarrow B$, $\psi_2:W_2\twoheadrightarrow B$ such that $$V=W_1\times_B W_2:=\{(x_1,x_2)\in W_1\oplus W_2: \psi_1(x_1)=\psi_2(x_2)\}.$$
Note that $\dim_\kappa(B)=\dim_\kappa(W/V)=\dim_\kappa(Z/(L\cap Z))=j$.

The $\kappa$-submodules $U$ of $V$ satisfying $(\kappa_1\times\kappa_2)U=W$ are parameterised by triples $(U_1,U_2,\varphi)$ consisting of a $\kappa$-submodule $U_1$ of $\ker\psi_1$, a $\kappa$-submodule $U_2$ of $\ker\psi_2$, and a $\kappa$-isomorphism $\varphi:W_1/U_1\xrightarrow{\sim} W_2/U_2$ such that the following diagram commutes.
\[\begin{tikzcd}
	{W_1/U_1} & {} & {W_2/U_2} \\
	{W_1/\ker\psi_1} && {W_2/\ker\psi_2} \\
	& B
	\arrow["\varphi"', from=1-1, to=1-3]
	\arrow[two heads, from=1-1, to=2-1]
	\arrow[two heads, from=1-3, to=2-3]
	\arrow["{\bar{\psi}_1}", from=2-1, to=3-2]
	\arrow["{\bar{\psi}_2}"', from=2-3, to=3-2]
\end{tikzcd}\]
Indeed, given a triple $(U_1,U_2,\varphi)$, we put 
$$U=\{(x_1,x_2)\in W_1\oplus W_2: \varphi(x_1+U_1)=x_2+U_2\}.$$ 
Conversely, given $U$, we put
$$U_1=\{x_1\in W_1: (x_1,0)\in U\}, \qquad U_2=\{x_2\in W_2: (0,x_2)\in U\},$$
and $\varphi:W_1/U_1\xrightarrow{\sim} W_2/U_2$ is given by Goursat's lemma.

Suppose that $(U_1,U_2,\varphi)$ is such a triple. The commutativity of the diagram implies that $\dim_\kappa(\ker\psi_1/U_1)=\dim_\kappa(\ker\psi_2/U_2)=\ell$, say. Note that $\dim_\kappa(\ker\psi_i)=\dim_\kappa(W_i)-\dim_\kappa(B)=m_i-j$. Therefore, $\ell\le\min(m_1-j,m_2-j)$.

For such a fixed $\ell$, there are
$$\binom{m_i-j}{\ell}_q$$
choices for $U_i$.

Now, fix such a pair $(U_1,U_2)$. We observe that the quotients $W_i/U_i$ are extensions
of $B$ by $\ell$-dimensional $\kappa$-modules. After choosing splittings,
an admissible $\kappa$-isomorphism $\varphi:W_1/U_1\xrightarrow{\sim} W_2/U_2$ is determined by an element of
$\GL_\ell(\kappa)$ and an element of
$\Hom_\kappa(B,\kappa^\ell)$. Hence, the number of such isomorphisms is $|\GL_\ell(\kappa)|q^{j\ell}$.

Note that $V=W_1\times_B W_2$ and $U$ is similarly a fibre product of $W_1,W_2$ over a $j+\ell$-dimensional $\kappa$-module; $\dim_\kappa(W_i/U_i)=\dim_\kappa(W_i/\ker\psi_i)+\dim_\kappa(\ker\psi_i/U_i)=j+\ell$. Therefore, we find $\dim_\kappa(V/U)=(j+\ell)-j=\ell$, and thus $|V/U|^{-s}=q^{-\ell s}$.

Our formula for $P_\iota(L;Z;s)$ now follows from these considerations.
\end{proof}

\begin{proposition}\label{prop_Q}
Let $Z$ be a finite-index $\Lambda'$-submodule of $L'$ containing $IL'$. Let $$V=(L\cap Z)/IL', \qquad W=Z/IL', \qquad W_1=\kappa_1W, \qquad W_2=\kappa_2W.$$ 
Put $j=\dim_\kappa(Z/(L\cap Z))=\dim_\kappa(W/V)$, and assume that $(\kappa_1\times\kappa_2)V=W$. Then there exists a $j$-dimensional $\kappa$-submodule $B$ of $C$ such that $\rho_1(W_1)=B=\rho_2(W_2)$. Write $d_i=\dim_\kappa(W_i)-j$ and $n_i=m_i-c$. Then we have the formulae
$$|L/(L\cap Z)|^{-s}=q^{-(c-j)s}\cdot q^{-(n_1-d_1)s}\cdot q^{-(n_2-d_2)s}$$
and
$$Q_{\Lambda'}(W;s)=\prod_{k=0}^{j-1}(1-q^{k-s})^2\cdot \prod_{k=0}^{d_1-1}(1-q^k q^{j-s}) \cdot \prod_{k=0}^{d_2-1}(1-q^k q^{j-s}).$$
\end{proposition}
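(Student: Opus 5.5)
The plan is to translate everything into linear algebra inside $M=L'/IL'=M_1\oplus M_2$, where $\Lambda'/I\simeq\kappa_1\times\kappa_2$. Under this translation $W=Z/IL'$ is a $\kappa_1\times\kappa_2$-submodule, so $W=W_1\oplus W_2$ with $W_i\subseteq M_i$. Since $IL'=IL\subseteq L$, we also have
$$V=(L\cap Z)/IL'=W\cap N=\{(x_1,x_2)\in W_1\oplus W_2:\rho_1(x_1)=\rho_2(x_2)\}.$$

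\textbf{The subspace $B$.} The hypothesis $(\kappa_1\times\kappa_2)V=W$ says that both projections $V\to W_i$ are surjective.

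Surjectivity onto $W_1$ means that for every $x_1\in W_1$ there is some $x_2\in W_2$ with $\rho_2(x_2)=\rho_1(x_1)$. Hence $\rho_1(W_1)\subseteq\rho_2(W_2)$, and by symmetry the two images coincide. Call this common image $B\subseteq C$.

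Then $V=W_1\times_B W_2$, where the maps are the restrictions $\rho_i|_{W_i}:W_i\twoheadrightarrow B$. Consequently
$$\dim W-\dim V=\dim B,$$
so $\dim B=j$.

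\textbf{The index formula.} We have $L/(L\cap Z)\cong (L+Z)/Z$. Passing to quotients by $IL'\subseteq L\cap Z$, this becomes $L/(L\cap Z)\cong N/(N\cap W)=N/V$. Its $\kappa$-dimension is
$$\dim N-\dim V=(m_1+m_2-c)-(d_1+d_2+j),$$
using $\dim V=\dim W_1+\dim W_2-j=(d_1+j)+(d_2+j)-j$.

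Substituting $m_i=n_i+c$ rewrites this as
$$(c-j)+(n_1-d_1)+(n_2-d_2),$$
which gives the first formula after raising $q$ to the power $-s$ times this dimension.

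\textbf{The formula for $Q_{\Lambda'}(W;s)$.} The Möbius function $\mu(U,W)$ depends only on the interval $[U,W]$ of the submodule poset of $M$. That interval is the lattice of $\kappa_1\times\kappa_2$-submodules between $U$ and $W$. Writing $U=U_1\oplus U_2$, this lattice is the product of the two subspace lattices $[U_1,W_1]$ and $[U_2,W_2]$. Hence $\mu$ factors as $\mu(U_1,W_1)\mu(U_2,W_2)$. The weight also factors:
$$|W/U|^{-s}=q^{-s\,\mathrm{codim}\,U_1}\,q^{-s\,\mathrm{codim}\,U_2}.$$
Therefore $Q_{\Lambda'}(W;s)$ is the product of the corresponding sums for the $\kappa$-vector spaces $W_1$ and $W_2$.

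For a $\kappa$-vector space of dimension $n$, I will use the classical value $\mu=(-1)^r q^{\binom r2}$ on an interval of codimension $r$. The sum then becomes
$$\sum_r\binom nr_q(-1)^rq^{\binom r2}x^r=\prod_{k=0}^{n-1}(1-q^kx),\qquad x=q^{-s},$$
by the $q$-binomial (Cauchy) theorem.

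With $n=\dim W_i=d_i+j$, I split each product into the factors with $k<j$ and those with $k=j+k'$, where $0\le k'<d_i$. This yields
$$\prod_{k=0}^{j-1}(1-q^{k-s})^2\cdot\prod_{i=1}^{2}\,\prod_{k=0}^{d_i-1}(1-q^kq^{j-s}),$$
as claimed.

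The main obstacle is the Möbius computation: checking that the interval in the ambient poset of $M$ really decomposes as this product of subspace lattices, and invoking (or briefly deriving) the $q$-binomial identity. The remaining steps are dimension bookkeeping.
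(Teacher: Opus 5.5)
Your proposal is correct and follows essentially the same route as the paper's proof. You identify $V=W\cap N$ as the fibre product $W_1\times_B W_2$ over $B=\rho_1(W_1)=\rho_2(W_2)$, obtain the index formula from $\dim_\kappa N-\dim_\kappa V$, and factor $Q_{\Lambda'}(W;s)$ over the two components using Hall's M\"obius formula and Cauchy's $q$-binomial theorem. You also spell out two justifications the paper leaves implicit: the surjectivity-of-projections argument for $\rho_1(W_1)=\rho_2(W_2)$, and the product-poset decomposition of the interval $[U,W]$.
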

\begin{proof}
Since $(\kappa_1\times\kappa_2)V=W$ and $$V=N\cap W=\{(x_1,x_2)\in W_1\oplus W_2: \rho_1(x_1)=\rho_2(x_2)\},$$ we have a $\kappa$-submodule $B$ of $C$ such that $\rho_1(W_1)=B=\rho_2(W_2)$. Moreover, we have $V=W_1\times_B W_2$ and thus also $\dim_\kappa(B)=\dim_\kappa(W/V)=j$.

Since $N=M_1\times_C M_2$, we have $\dim_\kappa(N)=m_1+m_2-c$. Similarly, we have $\dim_\kappa(V)=(d_1+j)+(d_2+j)-j$. Recalling again that $IL=IL'$, we compute 
\begin{align*}
    \dim_\kappa(L/(L\cap Z))&=\dim_\kappa(L/IL)-\dim_\kappa((L\cap Z)/IL')\\
    &=\dim_\kappa(N)-\dim_\kappa(V)\\
    &=(m_1+m_2-c)-((d_1+j)+(d_2+j)-j)\\
    &=(c-j) + (m_1-c-d_1) + (m_2-c-d_2)\\
    &=(c-j) + (n_1-d_1)+ (n_2-d_2),
\end{align*}
so that 
$$|L/(L\cap Z)|^{-s}=q^{-(c-j)s}\cdot q^{-(n_1-d_1)s}\cdot q^{-(n_2-d_2)s}.$$

    Now, let $\mu_i$ be the M\"obius function of the poset of $\kappa_i$-submodules of $W_i$. Then  
    $$Q_{\Lambda'}(W;s)=\sum_{U\le_{\kappa_1\times\kappa_2} W}\mu(U,W)|W/U|^{-s}=\prod_{i=1}^2 \sum_{U_i\le_{\kappa_i} W_i}\mu_i(U_i,W_i)|W_i/U_i|^{-s}.$$
    By a result of Hall (see \cite[\S.~5, Example~2]{Rota}), 
$$\mu_i(U_i,W_i)=(-1)^\ell q^{\ell(\ell-1)/2}$$
for every $\kappa_i$-submodule $U_i$ of $W_i$ with $\dim_{\kappa_i}(W_i/U_i)=\ell$. Making use of $q$-binomial coefficients and Cauchy's $q$-binomial theorem \cite[\S.~3.3]{And84}, we see that 
$$\sum_{U_i\le W_i}\mu_i(U_i,W_i)|W_i/U_i|^{-s}=\sum_{\ell=0}^{d_i+j} \binom{d_i+j}{\ell}_{q} (-1)^\ell q^{\ell(\ell-1)/2} q^{-\ell s}=\prod_{k=0}^{d_i+j-1}(1-q^{k-s}).$$
Finally, to arrive at the desired formula for $Q_{\Lambda'}(W;s)$, we simply write
$$\prod_{k=0}^{d_i+j-1}(1-q^{k-s})=\prod_{k=0}^{j-1}(1-q^{k-s})\cdot \prod_{k=0}^{d_i-1}(1-q^k q^{j-s}).$$
\end{proof}

\begin{theorem}\label{theorem_application}
    Let $R$ be the ring of integers in a nonarchimedean local field $K$. Let $\Lambda$ be an $R$-order in a finite-dimensional semisimple $K$-algebra $A$. Let $L$ be a left $\Lambda$-lattice. Let $\Lambda'$ be a maximal $R$-order in $A$ containing $\Lambda$. Put $L'=\Lambda'L$ and $I=\{x\in A: \Lambda'x\Lambda'\subseteq\Lambda\}$. Let $\kappa=\mathbb{F}_q$, and let $\kappa_1,\kappa_2$ be copies of $\kappa$. Suppose that $I=J(\Lambda')=J(\Lambda)$, and that the embedding $\Lambda/I\hookrightarrow\Lambda'/I$ is isomorphic to the diagonal embedding $\kappa\hookrightarrow\kappa_1\times\kappa_2$. Let $M=L'/IL'$, $M_1=\kappa_1M$, $M_2=\kappa_2M$, $N=L/IL$. Noting that $IL'=IL$, so that $L/IL$ is a $\Lambda/I$-submodule of $L'/IL'$, we let $m_1=\dim_{\kappa_1}(M_1)$, $m_2=\dim_{\kappa_2}(M_2)$, $c=\dim_\kappa(M/N)$. Write $\binom{\alpha}{\beta}_q$ for the Gaussian binomial coefficient enumerating the $\beta$-dimensional subspaces of $\mathbb{F}_q^{\oplus\alpha}$. Then we have the formulae
$$\zeta_{\Lambda'}(L';s)=\prod_{h=0}^{m_1-1}(1-q^{h-s})^{-1}\cdot\prod_{i=0}^{m_2-1}(1-q^{i-s})^{-1}$$
and
    $$\frac{\zeta_\Lambda(L;s)}{\zeta_{\Lambda'}(L';s)}=\sum_{j=0}^c \binom{c}{j}_q q^{-(c-j)s}P^{m_1,m_2}_j(q,s)\prod_{k=0}^{j-1}(1-q^{k-s})^2,$$
    where
    $$P^{m_1,m_2}_j(q,s)=\sum_{\ell=0}^{\min(m_1-j,m_2-j)}\binom{m_1-j}{\ell}_q\binom{m_2-j}{\ell}_q |\GL_\ell(\mathbb{F}_q)|q^{\ell(j-s)}.$$
    Moreover, we have $c\le\min(m_1,m_2)$.
\end{theorem}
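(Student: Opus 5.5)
We apply Corollary \ref{corollary_last_effective_formula} in its second form (summing over $Z$), evaluate each term using Propositions \ref{prop_P} and \ref{prop_Q}, and then collapse the resulting sum with a $q$-binomial identity.

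\emph{Hey's formula.} Because $J(\Lambda')=I$ and $\Lambda'/I\simeq\kappa_1\times\kappa_2$, the algebra $A$ has exactly two simple components. For each one, $n_i=1$ and $q_i=q$, since the residue algebra of the component is $\mathbb{F}_q$ and not a matrix ring. Also $L'/J(\Lambda')L'=M$, so $\ell_i=m_i$. Corollary \ref{corollary_hey's_formula} then gives the stated product. The inequality $c\le\min(m_1,m_2)$ was already obtained from Goursat's lemma in the setup.

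\emph{Reduction to a sum over $W$.} By Corollary \ref{corollary_last_effective_formula},
$$\frac{\zeta_\Lambda(L;s)}{\zeta_{\Lambda'}(L';s)}=\sum_{Z}|L/(L\cap Z)|^{-s}P_\iota(L;Z;s)Q_{\Lambda'}(Z/IL';s),$$
with $Z\supseteq IL'$. Such $Z$ correspond to $\kappa_1\times\kappa_2$-submodules $W=W_1\oplus W_2$ of $M$, with $V=N\cap W$. By Proposition \ref{prop_P}, only those $W$ with $(\kappa_1\times\kappa_2)V=W$ contribute, and then $P_\iota(L;Z;s)=P_j^{m_1,m_2}(q,s)$ depends only on $j$. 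Proposition \ref{prop_Q} shows that such $W$ determine a $j$-dimensional subspace $B\subseteq C$ with $\rho_i(W_i)=B$. Conversely, any $W_i\subseteq M_i$ with $\rho_1(W_1)=\rho_2(W_2)=B$ satisfies the condition, since $V=W_1\times_BW_2$ then projects onto each $W_i$.

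Inserting the formulas of Proposition \ref{prop_Q}, the term for $W$ factors as
$$q^{-(c-j)s}\,P_j^{m_1,m_2}(q,s)\prod_{k=0}^{j-1}(1-q^{k-s})^2\cdot\prod_{i=1}^2\Big(q^{-(n_i-d_i)s}\prod_{k=0}^{d_i-1}(1-q^{k}q^{j-s})\Big).$$
There are $\binom{c}{j}_q$ choices of $B$, and for fixed $B$ the choices of $W_1$ and $W_2$ are independent. It therefore suffices to show that for each $i$,
$$\sum_{W_i:\ \rho_i(W_i)=B} q^{-(n_i-d_i)s}\prod_{k=0}^{d_i-1}(1-q^{k+j-s})=1.$$

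\emph{Counting $W_i$.} Each such $W_i$ is determined by $K=W_i\cap\ker\rho_i$, which has dimension $d_i$ inside the $n_i$-dimensional space $\ker\rho_i$, together with a lift of $B$ modulo $K$. There are $\binom{n_i}{d_i}_q$ choices of $K$ and $q^{j(n_i-d_i)}$ lifts. Putting $y=q^{j-s}$, the required identity becomes
$$\sum_{d=0}^{n}\binom{n}{d}_q\,y^{n-d}\prod_{k=0}^{d-1}(1-q^ky)=1.$$

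\emph{The main obstacle: proving this identity.} I would first argue by induction on $n$ using the $q$-Pascal recurrence
$$\binom{n+1}{d}_q=\binom{n}{d-1}_q+q^{d}\binom{n}{d}_q,$$
and check that the resulting terms telescope.

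As a backup, the identity has a conceptual proof. For a $\kappa$-space $E$ of dimension $n$, set $Q(E;s)=\sum_{U'\le E}\mu(U',E)\,|E/U'|^{-s}$ with $\mu$ the Möbius function of the subspace lattice of $E$. Then
$$\sum_{U\le E}|E/U|^{-s}\,Q(U;s)=\sum_{T\le E}|E/T|^{-s}\sum_{T\le U\le E}\mu(T,U)=1,$$
because the inner sum vanishes unless $T=E$. Evaluating $Q(U;s)$ by Hall's formula and the $q$-binomial theorem, exactly as in the proof of Proposition \ref{prop_Q}, gives $Q(U;s)=\prod_{k=0}^{\dim U-1}(1-q^{k-s})$. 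This is the identity with $y=q^{-s}$. Since both sides are polynomial identities in the single variable $y$, it holds for general $y$, including $y=q^{j-s}$.
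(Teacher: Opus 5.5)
Your proposal is correct and follows the paper's proof essentially step for step. The shared steps are Hey's formula with $n_i=1$, $q_i=q$, $\ell_i=m_i$; the $Z$-sum form of Corollary~\ref{corollary_last_effective_formula}; Propositions~\ref{prop_P} and~\ref{prop_Q}; the count $\binom{n_i}{d_i}_q q^{j(n_i-d_i)}$ of the $W_i$ lying over a fixed $B$; and the reduction to $\sum_{d=0}^{n}\binom{n}{d}_q y^{n-d}\prod_{k=0}^{d-1}(1-q^ky)=1$ with $y=q^{j-s}$.

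The differences are minor. The paper quotes this identity from Simeonov's Proposition~5.2, whereas you prove it yourself, and both of your arguments are valid. In the $q$-Pascal induction nothing actually telescopes: the two pieces recombine termwise as $(1-q^dy)+q^dy=1$, so the step closes in one line. You also explicitly check the converse, that every pair with $\rho_1(W_1)=\rho_2(W_2)=B$ contributes with $\dim(W/V)=j$; the paper leaves this implicit.
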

\begin{proof}
The finite-index $\Lambda'$-submodules of $L'$ are all isomorphic as left $\Lambda'$-modules, and we have the semisimple decompositions
    $$\Lambda'/J(\Lambda')\simeq \kappa_1\times\kappa_2, \qquad L'/J(\Lambda')L'\simeq \kappa_1^{\oplus m_1}\oplus \kappa_2^{\oplus m_2}.$$
Therefore, by the author's abstract Hey's formula \cite[Prop.~A.1]{Lynch2026}, we have
$$Z_{\Lambda'}(L')=\prod_{h=0}^{m_1-1}(1-q^{h}[\kappa_1])^{-1}\cdot\prod_{i=0}^{m_2-1}(1-q^{i}[\kappa_2])^{-1}.$$
Applying the continuous ring homomorphism $\mathbb{Z}\llbracket [\kappa_1],[\kappa_2]\rrbracket\to\mathbb{Z}\llbracket q^{-s}\rrbracket: [\kappa_1],[\kappa_2]\mapsto q^{-s}$, we obtain the stated formula for $\zeta_{\Lambda'}(L';s)$.

Now, we are left to apply the following formula from Corollary \ref{corollary_last_effective_formula}.
$$\frac{\zeta_\Lambda(L;s)}{\zeta_{\Lambda'}(L';s)}=\sum_{\substack{Z\le_{\Lambda'} L'\\ IL'\subseteq Z}}|L/(L\cap Z)|^{-s}P_\iota(L;Z;s)Q_{\Lambda'}(Z/IL';s)$$
By Proposition \ref{prop_P}, we have
\begin{align*}
    \frac{\zeta_\Lambda(L;s)}{\zeta_{\Lambda'}(L';s)}&=\sum_{j=0}^c\sum_{\substack{Z\le_{\Lambda'} L'\\ IL'\subseteq Z \\ \dim_\kappa(Z/(L\cap Z))=j}}|L/(L\cap Z)|^{-s}P_\iota(L;Z;s)Q_{\Lambda'}(Z/IL';s)\\
    &=\sum_{j=0}^c\sum_{\substack{Z\le_{\Lambda'} L'\\ IL'\subseteq Z\\ \dim_\kappa(Z/(L\cap Z))=j \\ (\kappa_1\times\kappa_2)\frac{L\cap Z}{IL'}=\frac{Z}{IL'}}}|L/(L\cap Z)|^{-s}P_j^{m_1,m_2}(q,s)Q_{\Lambda'}(Z/IL';s).
\end{align*}
Now, let us consider the inner sum over $Z$. Following Proposition \ref{prop_Q}, we write $W=Z/IL'$, $W_1=\kappa_1W$, $W_2=\kappa_2 W$. By Goursat's lemma, there exists a $c$-dimensional $\kappa$-module $C$ and epimorphisms $\rho_1:M_1\twoheadrightarrow C$, $\rho_2:M_2\twoheadrightarrow C$ such that $N=M_1\times_C M_2$. In particular, we have $c\le \min(m_1,m_2)$. Then, by the proposition, we have a $j$-dimensional $\kappa$-submodule $B$ of $C$ such that $\rho_1(W_1)=B=\rho_2(W_2)$. 

Write $n_i=m_i-c=\dim_\kappa(\ker\rho_i)$. For a fixed $j$-dimensional $\kappa$-submodule $B$ of $C$ and fixed $d_i=\dim_\kappa(W_i)-j$, the number of choices of $W_i$ is equal to 
$$\binom{n_i}{d_i}_q q^{j(n_i-d_i)}.$$
Indeed, we first choose the $d_i$-dimensional $\kappa$-subspace $D_i=W_i\cap\ker\rho_i$ of $\ker\rho_i$. Then, once $D_i$ is fixed, the possible $W_i$ correspond to splittings of the short exact sequence of $\kappa$-modules $0\to \ker\rho_i/D_i\to \rho_i^{-1}(B)/D_i\to B\to 0$. The set of such splittings is a torsor under $\Hom_\kappa(B,\ker\rho_i/D_i)$, and therefore has cardinality $q^{j(n_i-d_i)}$.

Hence, by Proposition \ref{prop_Q}, the contribution to $\zeta_\Lambda(L;s)/\zeta_{\Lambda'}(L';s)$ of a fixed $j$-dimensional $\kappa$-submodule $B$ of $C$ is equal to 
$$q^{-(c-j)s} P_j^{m_1,m_2}(q,s)\prod_{k=0}^{j-1}(1-q^{k-s})^2 \prod_{i=1}^2 \sum_{d_i=0}^{n_i}\binom{n_i}{d_i}_q q^{(n_i-d_i)(j-s)}\prod_{k=0}^{d_i-1}(1-q^k q^{j-s}).$$
By \cite[Prop.~5.2]{simeonov}, we have the identity
$$\sum_{d_i=0}^{n_i}\binom{n_i}{d_i}_q q^{(n_i-d_i)(j-s)}\prod_{k=0}^{d_i-1}(1-q^k q^{j-s})=1.$$
Therefore, the above contribution simplifies to 
$$q^{-(c-j)s} P_j^{m_1,m_2}(q,s)\prod_{k=0}^{j-1}(1-q^{k-s})^2.$$
Summing over all $B$ and $j$, we obtain
\begin{align*}
    \frac{\zeta_\Lambda(L;s)}{\zeta_{\Lambda'}(L';s)}=\sum_{j=0}^c \binom{c}{j}_q q^{-(c-j)s} P_j^{m_1,m_2}(q,s)\prod_{k=0}^{j-1}(1-q^{k-s})^2,  
\end{align*}
as desired.
\end{proof}

\begin{example}
    Let $\Lambda=\mathbb{Z}_p[\mathbb{Z}/p\mathbb{Z}]\simeq\mathbb{Z}_p\times_{\mathbb{F}_p}\mathbb{Z}_p[\omega]$, where $p$ is a prime number and $\omega$ is a primitive $p$-th root of unity. Let $L$ be a $\Lambda$-lattice, and let $a,b,c$ be the unique nonnegative integers such that $L\simeq \mathbb{Z}_p^{\oplus a}\oplus \mathbb{Z}_p[\omega]^{\oplus b} \oplus \Lambda^{\oplus c}$. Then we may apply Theorem \ref{theorem_application} with $q=p$, $m_1=a+c$ and $m_2=b+c$, thus recovering Theorem \ref{intro_group_algebra}.
\end{example}

\section{Declaration of generative AI and AI-assisted technologies in the manuscript preparation process}

During the preparation of this work, the author used ChatGPT to improve the readability of the manuscript. After using this tool, the author reviewed and edited the content as necessary and takes full responsibility for the final published content.

\printbibliography

\Addresses

\end{document}